\numberwithin{equation}{section}
\newtheorem{rem}{Remark}[section]
\newtheorem{lem}{Lemma}[section]
\newtheorem{pro}{Proposition}[section]
\newtheorem{theo}{Theorem}[section]
\newtheorem{ass}{Assumption}
\newcommand{\bX}{\mathbf{X}}
\newcommand{\bx}{\mathbf{x}}
\newcommand{\bz}{\mathbf{z}}
\newcommand{\bZ}{\mathbf{Z}}
\definecolor{gris25}{gray}{0.90}
\begin{document}

\begin{center}
{\Large 
\textbf{\textsf{Online Asynchronous Distributed Regression}}}
\medskip
\medskip
\end{center}

{\bf G\'erard Biau\footnote{Research carried out within the INRIA project ``CLASSIC'' hosted by Ecole Normale Sup{\'e}rieure and CNRS.}}\\
{\it Sorbonne Universit\'es, UPMC Univ Paris 06, F-75005, Paris, France\\
\& Institut universitaire de France}\\
\href{mailto:gerard.biau@upmc.fr}{gerard.biau@upmc.fr}
\bigskip
\newline
{\bf Ryad Zenine}
\newline
{\it Sorbonne Universit\'es, UPMC Univ Paris 06, F-75005, Paris, France}\\
\href{mailto:r.zenine@gmail.com}{r.zenine@gmail.com}
\bigskip
\begin{abstract}
\noindent {\rm 
Distributed computing offers a high degree of 
flexibility to accommodate modern learning
constraints and the ever increasing size of datasets 
involved in massive data issues. 
Drawing inspiration from the theory of distributed
computation models developed in the
context of gradient-type
optimization algorithms, we present a consensus-based
asynchronous distributed approach for nonparametric online
regression and analyze some of its asymptotic properties. Substantial numerical evidence involving up to
 28 parallel processors is provided on synthetic
 datasets to assess the excellent performance of our method, both
 in terms of computation time and prediction accuracy.
 \medskip
 
\noindent \emph{Index Terms} --- Online regression estimation, distributed computing, asynchronism, message passing.
\medskip

\noindent \emph{2010 Mathematics Subject Classification}: 62G08, 62G20, 68W15.}

\end{abstract}

\section{Introduction}
Parallel and distributed computation is currently an area of intense research
activity, motivated by a variety of factors. Examples of such factors include, but are not restricted to:
\begin{enumerate}[$(i)$]
\item Massive data 
challenges, where the sample size 
is too large to fit into a single computer or to operate 
with standard computing resources \citep[see, e.g., the discussion in][]{Jo13};
\item An increasing necessity of robustness and fault tolerance, 
that enables a system to  continue operating properly in the event of a failure; 
\item Advent of sensor, wireless and peer-to-peer networks, which obtain information 
on the state of the environment and must process it cooperatively.
\end{enumerate}
Moreover, in a growing number of distributed organizations, data are acquired sequentially and must be efficiently processed in real-time, thus avoiding batch requests or communication with a fusion center. In this sequential context, a promising way is to deal with decentralized distributed systems, in which a communication to a central processing unit is unnecessary. Yet, designing and analyzing such distributed online learning algorithms that can quickly and efficiently process large amounts of data poses several mathematical and computational challenges and, as such, is one of the exciting questions asked to the statistics and machine learning fields.
\medskip

In the present paper, we elaborate on the theory of distributed and asynchronous computation models developed in the context of deterministic and stochastic gradient-type optimization algorithms by \citet{TsBeAt86}, \citet{BeTs89}, and \citet{BlHeOlTs05}. Equipped with this theory, we present a consensus-based asynchronous distributed approach for nonparametric online regression and analyze some of its asymptotic properties. In the model that we consider, there is a number of computing entities (also called processors, agents, or workers hereafter) which perform online regression estimation by regularly updating an estimate stored in their memory. In the meanwhile, they exchange messages, thus informing each other about the results of their latest computations. Processors which receive messages use them to update directly the value in their memory by forming a convex combination. Weak assumptions are made about the frequency and relative timing of computations or message transmissions by the agents.
\medskip

The general framework is that of nonparametric regression estimation, in which an input random vector $\bX \in (\mathbb R^d, \|\cdot\|)$ is considered, and the goal is to predict the integrable random response $Y\in \mathbb R$ by assessing the regression function $r(\bx)=\mathbb E[Y|\bX=\bx]$. In the classical, offline setup \citep[see, e.g.,][]{GyKoKr02}, one is given a batch sample $\mathcal D_t=(\bX_1,Y_1), \hdots, (\bX_t,Y_t)$ of i.i.d.~random variables, distributed as  (and independent of) the prototype pair $(\bX, Y)$. The objective, then, is to use the entire dataset $\mathcal D_t$ to construct an estimate $r_t:\mathbb R^d \to \mathbb R$ of the regression function $r$. However, in many contemporary learning tasks, data arrive sequentially, and possibly asynchronously. In such cases, estimation has to be carried out online, by processing each observation one at a time and updating recursively the estimate step by step. Extending ideas from stochastic approximation \citep[][]{RoMo51,KiWo52}, \citet{Re73,Re77} introduced a kernel-type online estimate of $r$ and demonstrated some of its appealing properties.
\medskip

In a word, our architecture parallelizes several executions of R\'ev\'esz's method concurrently on a set $\{1, \hdots, M\}$ of processors that try to reach agreement on the estimation of $r(\bx)$ by asynchronously exchanging tentative values and fusing them via convex combinations. The data are sequentially allocated to the memories of these machines, so that agent $i$ sequentially receives the i.i.d.~sequence of observations $(\bX^i_1,Y^i_1), (\bX^i_2,Y^i_2), \hdots, (\bX^i_t,Y^i_{t}), (\bX^i_{t+1},Y^i_{t+1}),\hdots$, and use them to compute its estimate $r_t^i(\bx)$ of $r(\bx)$. In addition, at time $t\geq 1$, any agent $j$ in the network may transmit its current estimate $r_t^j(\bx)$ to some (possibly all or none) of the other processors. If a message from processor $j$ is received by processor $i$ $(i \neq j$) at time $t$, let $\tau^{ij}_t$ be the time that this message was sent. Therefore, the content of such a message is precisely $r^j_{\tau^{ij}_t}(\bx)$, which is simpler denoted by $r^j(\bx,\tau^{ij}_t)$. Thus, omitting details for the moment, the estimated value held by agent $i$ is updated according to the equation
\begin{equation*}
\left \{ \begin{array}{llll}
r^i_1(\bx) & = & Y^i_1&\\
r^i_{t+1}(\bx)&=&\sum_{j=1}^Ma_t^{ij} r^j(\bx,\tau^{ij}_t)+s^i_t & \mbox{for $t\geq 1$},
\end{array}
\right .
\end{equation*}
where the coefficients $a_t^{ij}$ are (deterministic) nonnegative real numbers satisfying the constraint $\sum_{j=1}^Ma^{ij}_t=1$. The term $s^i_t\equiv s^i_t(\bX^i_{t+1},Y^i_{t+1},r^i_t(\bx))$, which will be made precise later, is a local R\'ev\'esz-type computation step, to be used in evaluating the new estimate $r^i_{t+1}(\bx)$. The model can be interpreted as follows: At any time $t$, processor $i$ receives messages from other processors containing the $r^j(\bx,{\tau^{ij}_t})$'s ; it incorporates this information by forming a convex combination and adding the scalar $s^i_t$ resulting from its own local computations. The time instants $(\tau^{ij}_t)_{t \geq 1}$ are deterministic but unknown and the families $(a^{ij}_t)_{t\geq 1}$ define the weights of convex combinations.
\medskip

Under weak assumptions on the network architecture and the communication delays, we establish in Theorem \ref{bigtheo} that our distributed algorithm guarantees asymptotic consensus, in the sense that, for all $i \in \{1, \hdots, M\}$, 
$$\mathbb E\left[\int_{\mathbb R^d}\left|r_t^i(\bx)-r(\bx)\right|^2\mu(\mbox{d}\bx)\right]\to 0 \quad \mbox{as } t \to \infty,$$
where $\mu$ is the distribution of $\bX$ and $Y$ is assumed to be bounded. From a practical point of view, an important feature of the presented procedure is its ability to process, for a given time span, much more data than a single processor execution. A similar architecture has been used successfully in the context of vector quantization by \citet{Pa11}. It has also some proximity with the so-called gossip algorithms \citep[see, e.g.,][]{BoGhPrSh06,BiFoHaJa11,BiFoHaJa11-2,BiClJaMo13}. However, the primary benefit of our strategy is that it is asynchronous, which means that local processes do not have to wait at preset points for messages to become available. This allows some processors to compute faster and execute more iterations than others---a major speed advantage over synchronous executions in networks where communication delays can be substantial and unpredictable. In fact, message passing and asynchronism offer a high degree of flexibility and would make it easier to include tolerance to system failures and uncertainty. Let us finally stress that, by its online nature, the algorithm is also able to manage time-varying data loads. Online approaches avoid costly and non-scalable batch requests on the whole dataset, and offers the opportunity to incorporate new data while the algorithm is already running.
\medskip

The paper is organized as follows. We present our asynchronous distributed regression estimation strategy in Section 2, and prove its convergence in Section 3. Section 4
is devoted to numerical experiments and simulated tests which
illustrate the performance of the approach. For ease of exposition, proofs are
collected in
Section 5.
\section{A model for distributed regression}
Let $(\bX_t,Y_t)_{t \geq 1}$ be a sequence of i.i.d.~random variables, distributed as (and independent of) the generic pair $(\bX,Y)$. Assume that $\mathbb E |Y|<\infty$ and let $r(\bx)=\mathbb E[Y |\bX=\bx]$ be the regression function of $Y$ on $\bX$. In its more general form, R\'ev\'esz's recursive estimate of $r(\bx)$ \citep[][]{Re73,Re77} takes the form
\begin{equation*}
{\small
\left \{ \begin{array}{llll}
r_1(\bx) & \!\!=\!\!& Y_1&\\
r_{t+1}(\bx)&\!\!=\!\!&r_t(\bx)\left(1-\varepsilon_{t+1}K_{t+1}(\bx,\bX_{t+1})\right)+ \varepsilon_{t+1}Y_{t+1}K_{t+1}(\bx,\bX_{t+1}) & \mbox{\hspace{-0.1cm}for $t\geq 1$},
\end{array}
\right .
}
\end{equation*}
where $(K_t(\cdot,\cdot))_{t \geq 1}$ is a sequence of measurable, symmetric and nonnegative-valued functions on $\mathbb R^d\times \mathbb R^d$, and $(\varepsilon_t)_{t\geq 1}$ are positive real parameters (by convention, $K_1(\cdot,\cdot) \equiv 1$ and $\varepsilon_1=1$). A major computational advantage of this definition is that the $(t+1)$-th estimate $r_{t+1}(\bx)$ can be evaluated on the basis of the $(t+1)$-th observation $(\bX_{t+1},Y_{t+1})$ and from the $t$-th estimate $r_t(\bx)$ only, without remembering the previous elements of the sample. It should also be noted that $r_{t+1}(\bx)$ is obtained as a linear combination of the estimates $r_{t}(\bx)$ and $Y_{t+1}$, with weights $1-\varepsilon_{t+1}K_{t+1}(\bx,\bX_{t+1})$ and $\varepsilon_{t+1}K_{t+1}(\bx,\bX_{t+1})$, respectively. In a more compact form, we write
\begin{equation*}
\left \{ \begin{array}{llll}
r_1(\bx) & = & Y_1&\\
r_{t+1}(\bx)&=&r_t(\bx)- \varepsilon_{t+1}H\left(\bZ_{t+1},r_t(\bx)\right)& \mbox{for $t\geq 1$},
\end{array}
\right .
\end{equation*}
where $\bZ_t=(\bX_t,Y_t)$ and, by definition, $H(\bZ_{t+1},r_t(\bx))=r_t(\bx)K_{t+1}(\bx,\bX_{t+1})-Y_{t+1}K_{t+1}(\bx,\bX_{t+1})$. We note that time starts at $1$, and that the data acquired at time $t$ is $\bZ_{t+1}$ along with an updated estimate equal to $r_{t+1}(\bx)$.
\medskip

Various choices are possible for the function $K_t$, each leading to a different type of estimate. Letting for example 
$$K_t(\bx,\bz)=\frac{1}{h_t^d}K \left (\frac{\bx-\bz}{h_t}\right), \quad \bx,\bz \in \mathbb R^d,$$
where $K:\mathbb R^d\to \mathbb R_+$ is a symmetric kernel and $(h_t)_{t \geq 1}$ a sequence of positive smoothing parameters, results in the recursive kernel estimate, originally studied by R\'ev\'esz. However, other options are possible for $K_t$, giving rise to diverse procedures such as the recursive partitioning, series and binary tree estimates. Asymptotic properties of R\'ev\'esz-type recursive estimates have been established for diverse choices of $K_t$ by \citet{Gy81}, \citet{GyWa96,GyWa97}, \citet{Wa01}, and \citet{MoPeSl09}, just to cite a few examples \citep[see also][Chapter 25]{GyKoKr02}.
\medskip

Returning to our distributed model, consider now a set $\{1, \hdots, M\}$ of computing entities that participate in the estimation of $r(\bx)$. In this construction, the data are spread over the agents, so that processor $i$ sequentially receives the i.i.d.~sequence $(\bX^i_1,Y^i_1), (\bX^i_2,Y^i_2), \hdots, (\bX^i_t,Y^i_t), (\bX^i_{t+1},Y^i_{t+1}), \hdots$, distributed as the prototype pair $(\bX,Y)$. Processor $i$ is initialized with $r^i_1(\bx)=Y^i_1$. At time $t\geq 1$, it receives measurement $(\bX^i_{t+1},Y^i_{t+1})$ and may calculate the new estimate $r^i_{t+1}(\bx)$ by executing a local R\'ev\'esz-type step. Moreover, besides its own measurements and computations, each agent may also receive messages from other processors and combine this information with its own conclusions. The computation/combining process is assumed to be as follows:
\begin{equation}
\label{distributed}
\left \{ \begin{array}{llll}
r^i_1(\bx) & = & Y_1^i&\\
r_{t+1}^i(\bx)&=& \sum_{j=1}^Ma_t^{ij} r^j(\bx,\tau^{ij}_t)+s^i_t& \mbox{for $t\geq 1$},
\end{array}
\right .
\end{equation}
where the $a_t^{ij}$'s are nonnegative real coefficients satisfying the constraint $\sum_{j=1}^Ma^{ij}_t=1$, for all $i \in \{1, \hdots, M\}$ and all $t\geq 1$. As in the introduction, the notation $r^j(\bx,\tau^{ij}_t)$ stands for $r^j_{\tau^{ij}_t}(\bx)$. This is the value received at time $t$ by agent $i$ from agent $j$, which is thus not necessarily the most recent one. Naturally, it is assumed that the deterministic time instants $(\tau^{ij}_t)_{t\geq 1}$ satisfy $1\leq \tau^{ij}_t \leq t$: The difference $t-\tau^{ij}_t$ represents communication and possibly other types of delay, such as latency and bandwidth finiteness. As for the term $s^i_t$, it is a R\'ev\'esz-type computation step, which takes the form:
\begin{equation*}
s^i_t= \left \{ \begin{array}{ll}
-\varepsilon_{t+1}^iH\left(\bZ^i_{t+1},r^i_t(\bx)\right) &  \mbox{if $t \in T^i$}\\
0 & \mbox{otherwise}.
\end{array}
\right .
\end{equation*}
In this definition, the set $T^i$ contains all time instants where processor $i$ updates its current estimate by performing an effective estimation (accordingly, processor $i$ is called computing).  Since the combining coefficients $a^{ij}_t$ depend on $t$, the network communication topology is sometimes referred to as time-varying. 
\medskip

Noteworthy, the sequences $(\tau^{ij}_t)_{t \geq 1}$ need not to be known in advance by any agent. In fact, their knowledge is not required to execute iterations. Thus, there is no need to dispose of a shared global clock or synchronized local clocks at the processors. The time variable $t$ we refer to corresponds to an iteration counter that is needed only for analysis purposes. In particular, the computing operations may not take the same time for all processors, which is a major advantage of asynchronous algorithms.
\section{Assumptions and main results}
We establish in this section that the architecture (\ref{distributed}) guarantees $L^2$ asymptotic consensus, i.e., for all $i \in \{1, \hdots, M\}$,
$$\mathbb E\left[\int_{\mathbb R^d}\left|r_t^i(\bx)-r(\bx)\right|^2\mu(\mbox{d}\bx)\right]\to 0 \quad \mbox{as } t\to \infty,$$
where $\mu$ is the distribution of $\bX$ and $Y$ is assumed to be bounded. However, this powerful result comes at the price of assumptions on the transmission network, which essentially demand that the time between consecutive communications of processors plus communication delays are not too large. 
\medskip

We start with some basic requirements on the coefficient sequences $(a^{ij}_t)_{t \geq 1}$ and the communication delays $(\tau_t^{ij})_{t \geq 1}$, which are adapted from \citet{BlHeOlTs05}.
\begin{ass}[Convex combinations]
There exists a constant $\alpha \in (0,1]$ such that:
\begin{enumerate}[{\bf (a)}]
\item $\sum_{j=1}^M a^{ij}_t=1$,  for all $i \in \{1, \hdots, M\}$ and $t\geq 1$.
\item $a^{ii}_t \geq \alpha$, for all $i \in \{1, \hdots, M\}$ and $t\geq 1$.
\item $a^{ij}_t \in \{0\} \cup [\alpha,1]$, for all $(i,j) \in \{1, \hdots, M\}^2$ and $t\geq 1$.
\end{enumerate}
\end{ass}

Assumption {\bf 1(a)} means that the combination operated by agent $i$ at time $t$ is a weighted average of its own value and the values that it has just received from other agents. Parts {\bf (b)} and {\bf (c)} avoid degenerate situations and guarantee that messages have a lasting effect on the states of computation of their recipients. Various special cases of interest are discussed in \citet{BlHeOlTs05}. For example, in the so-called equal neighbor model, one has
\begin{equation*}
a^{ij}_t= \left \{ \begin{array}{ll}
1/{\sharp N^{i}_t}&  \mbox{if $j \in N^{i}_t$}\\
0 & \mbox{otherwise},
\end{array}
\right .
\end{equation*}
where 
$$N^{i}_t=\left\{ j \in \{1, \hdots, M\}:a^{ij}_t > 0\right\}$$
is the set of agents whose value is taken into account by processor $i$ at time $t$ (the symbol $\sharp$ stands for cardinality). Note that here the constant $\alpha$ of Assumption {\bf 1(b)} is equal to $1/M$.
\begin{ass}[Bounded communication delays]
\label{ass_bounded_com}
\begin{enumerate}[{\bf (a)}]
\item[]
\item One has $a^{ij}_t=\mathbf 1_{[i \neq j]}$, for all $(i,j) \in \{1, \hdots, M\}^2$ and $t\in T^i$.
\item If $a^{ij}_t=0$, then $\tau^{ij}_t=t$, for all $(i,j) \in \{1, \hdots, M\}^2$ and $t \geq 1$.
\item One has $\tau^{ii}_t=t$, for all $i \in \{1, \hdots, M\}$ and $t\geq 1$.
\item There exists some constant $B_1\geq0$ such that $t-B_1\leq \tau^{ij}_t \leq t$, for all $(i,j) \in \{1, \hdots, M\}^2$ and $t \geq 1$.
\end{enumerate}
\end{ass}

Assumption {\bf 2(a)} means that no combining operation is performed by processor $i$ while a R\'ev\'esz-type computation is effectively performed. This requirement is not particularly restrictive and makes sense for practical implementations. Assumption {\bf 2(b)} is just a convention: When $a^{ij}_t=0$, the value of $\tau^{ij}_t$ has no effect on the update. Assumption {\bf 2(c)} is quite natural, since an agent generally has access to its own most recent value. Finally, Assumption {\bf 2(d)} requires the communication delays $t-\tau^{ij}_t$ to be bounded by some constant $B_1$. In particular, this assumption prevents a processor from taking into account some arbitrarily old values computed by other agents.
\medskip

Denote by $\mathcal M=\{1, \hdots, M\}$ the set of processors. The communication patterns at each time step, sometimes referred to as the network communication topology, can be described in terms of a directed graph $(\mathcal M,E_t)$, with vertices $\mathcal M$ and edges $E_t$ describing links, where $(i,j) \in E_t$ if and only if $a^{ij}_t > 0$. A minimal assumption, which is necessary for consensus to be reached, entails that following an arbitrary time $t$, and for any pair of agents $(i,j)$, there is a sequence of communications through which agent $i$ will influence (directly or indirectly) the future value held by agent $j$.
\begin{ass}[Graph connectivity]
The graph $(\mathcal M,\cup_{s \geq t} E_s)$ is strongly connected (i.e., every vertex is reachable from every other vertex) for all $t\geq 1$.
\end{ass}

We also require Assumption {\bf 4} below, which complements Assumption {\bf 3} by demanding that there is a finite upper bound on the length of communicating paths.
\begin{ass}[Bounded intercommunication intervals]
There is some constant $B_2\geq0$ such that if agent $i$ communicates to $j$ an infinite number of times (that is, if $(i,j)\in E_t$ infinitely often), then, for all $t \geq 1$, $(i,j) \in E_t \cup E_{t+1} \cup \cdots \cup E_{t+B_2}$.
\end{ass}

Our last assumption is of a more technical nature. Part {\bf (a)} requires that, at any time, there is at least one processor $i$ satisfying $s^i_t\neq 0$. Thus, there are no time instants where all processors are idle. Part {\bf (b)} is mainly to simplify the presentation and could easily be refined.  Notice that this latter requirement does not necessarily imply that each processor has knowledge of $t$, i.e., access to a global clock. Assuming that the time span between consecutive updates is bounded, it is for example satisfied by taking $\varepsilon_t^i$ proportional to $n_t^i=\sharp (T^i \cap \{1, \hdots, t\})$---that is, the number of times that processor $i$ has performed a computation up to time $t$.
\begin{ass}[Idle processors and learning rate]
\begin{enumerate}[{\bf (a)}]
\item[]
\item For all $t\geq 1$, one has $\sum_{j=1}^M \mathbf 1_{[t \in T^j]} \geq 1$.
\item There exist two constants $C_1>0$ and $C_2>0$ such that, for all $i\in \{1, \hdots, M\}$ and all $t\geq 1$,
$$\frac{C_1}{t} \leq \varepsilon_t^i \leq \frac{C_2}{t}.$$
\end{enumerate}
\end{ass}

We are now in a position to state our main result. 
\begin{theo}
\label{bigtheo}
Assume that Assumptions {\bf 1-5} are satisfied. Assume that there exist a sequence $(h_t)_{t \geq 1}$ of positive real numbers and a nonnegative, nonincreasing function $L$ on $[0,\infty)$ such that $h_t \to 0$ (as $t\to \infty$), $r^dL(r)\to 0$ (as $r \to \infty$) and, for all $\bx,\bz \in \mathbb R^d$ and all $t\geq 2$,
$$h_t^d K_t(\bx,\bz)\leq L\left (\frac{\|\bx-\bz\|}{h_t} \right).$$
Assume, in addition, that $Y$ is bounded, that
$$\sup_{t,\bx,\bz} \varepsilon_t^i K_t(\bx,\bz)\leq 1 \quad \mbox{for all $i \in \{1, \hdots, M\}$},$$
and that
\begin{equation*}
\underset{t\to \infty}{\lim\inf} \int_{\mathbb R^d} K_t(\bx,\bz)\mu(\emph{d}\bz)>0 \mbox{ at $\mu$-almost all $\bx \in \mathbb R^d$}.
\end{equation*}
Then, provided $(th_t^d)_{t \geq 1}$ is nondecreasing and $\sum_{t\geq 1} \frac{1}{t^2h_t^{2d}}<\infty$, one has, for all $i \in \{1, \hdots, M\}$, 
$$\mathbb E\left[\int_{\mathbb R^d}\left|r_t^i(\bx)-r(\bx)\right|^2\mu(\emph{d}\bx)\right]\to 0 \quad \mbox{as } t \to \infty.$$
\end{theo}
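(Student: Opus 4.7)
The plan is to decompose the error $r^i_t(\bx)-r(\bx)$ into an \emph{agreement} component, measuring how far processor $i$'s estimate departs from a common aggregate, and a \emph{learning} component, measuring how well this aggregate approximates $r$. The former is controlled by the asynchronous consensus machinery of \citet{BlHeOlTs05}; the latter by a R\'ev\'esz-type argument adapted to the distributed update schedule.

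First I would record a uniform bound on the estimates. Because $Y$ is bounded, say $|Y|\leq L$, and the hypothesis $\sup_{t,\bx,\bz}\varepsilon^i_t K_t(\bx,\bz)\leq 1$ makes each local R\'ev\'esz step a convex combination of $r^i_t(\bx)$ and $Y^i_{t+1}$, while the combining step in (\ref{distributed}) is itself a convex combination by Assumption \textbf{1}, an easy induction yields $|r^i_t(\bx)|\leq L$ for all $i,t,\bx$. As a consequence $|s^i_t|\leq 2L\varepsilon^i_{t+1}K_{t+1}(\bx,\bX^i_{t+1})$, which by Assumption \textbf{5(b)} decays like $1/t$ times a kernel factor. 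I would then invoke the agreement theorem of \citet{BlHeOlTs05}: Assumptions \textbf{1}--\textbf{4} imply that the asynchronous convex-combining iteration (\ref{distributed}) contracts disagreements geometrically, so there exist constants $C>0$ and $\rho\in(0,1)$ with $|r^i_t(\bx)-r^j_t(\bx)|\leq C\sum_{s=1}^{t-1}\rho^{t-s}\sum_{k=1}^M|s^k_s(\bx)|$. Combined with the previous bound on $s^k_s$, Cauchy--Schwarz then gives $\mathbb E\int|r^i_t-r^j_t|^2\mu(\mbox{d}\bx)\to 0$, i.e.\ asymptotic $L^2$-agreement between any two agents.

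It then remains to identify a collective estimator that tracks $r$. A convenient candidate is the arithmetic mean $\bar r_t(\bx)=\frac{1}{M}\sum_{i=1}^M r^i_t(\bx)$; averaging (\ref{distributed}) and exploiting that each row of $(a^{ij}_t)$ sums to one produces, up to a telescoping remainder governed by the bounded delay $B_1$ and the agreement bound, a recursion of the form $\bar r_{t+1}(\bx)=\bar r_t(\bx)+\frac{1}{M}\sum_{i=1}^M s^i_t$. Plugging in the definition of $s^i_t$, this is a R\'ev\'esz-type update with an effective learning rate of order $1/t$ and a data stream pooled from all $M$ agents. Under the hypotheses $h_t\to 0$, $r^dL(r)\to 0$, $(th_t^d)$ nondecreasing and $\sum_t 1/(t^2h_t^{2d})<\infty$, the standard bias--variance analysis for recursive kernel regression in the spirit of \citet{Wa01} and \citet[Chapter 25]{GyKoKr02} can be adapted to show $\mathbb E\int|\bar r_t-r|^2\mu(\mbox{d}\bx)\to 0$: the liminf assumption together with $h_t\to 0$ handle the bias, while the summability condition tames the cumulated martingale fluctuations. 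The elementary inequality $|r^i_t-r|^2\leq 2|r^i_t-\bar r_t|^2+2|\bar r_t-r|^2$ then closes the argument.

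The main obstacle will be validating the reduction of the collective recursion to a bona fide centralized R\'ev\'esz process. The asynchronous update instants $T^i$, the time-varying weights $a^{ij}_t$ and the delays $\tau^{ij}_t$ introduce a martingale-difference structure with non-stationary conditional variances, and the effective number of updates contributing to $\bar r_t$ at time $t$ is not $t$ but $\sum_i n^i_t$, which depends on the schedules. Checking that the familiar conditions $(th_t^d)$ nondecreasing and $\sum_t 1/(t^2h_t^{2d})<\infty$ still suffice to control both the cumulated stochastic fluctuations and the bias of the pooled recursion, once the bounded intercommunication interval $B_2$ and the bounded communication delay $B_1$ are folded into the agreement estimate, is, I expect, the most delicate part of the argument.
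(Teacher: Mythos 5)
Your high-level architecture (uniform boundedness, geometric contraction of disagreements, then consistency of a collective aggregate) matches the paper's, and your first two steps are sound: the boundedness induction is exactly Lemma \ref{tKtlemma}, and the geometric disagreement bound follows from Lemma \ref{lemmetsitsi}. But there is a genuine gap in the third step, and it is the step that carries the whole proof. You take the arithmetic mean $\bar r_t=\frac{1}{M}\sum_i r^i_t$ as the aggregate and claim that ``each row of $(a^{ij}_t)$ sums to one'' yields $\bar r_{t+1}=\bar r_t+\frac{1}{M}\sum_i s^i_t$ up to a telescoping remainder. Averaging (\ref{distributed}) over $i$ produces $\frac{1}{M}\sum_j\bigl(\sum_i a^{ij}_t\bigr)r^j(\bx,\tau^{ij}_t)$, i.e.\ it is the \emph{column} sums that would need to equal one, and Assumptions {\bf 1}--{\bf 4} do not give double stochasticity. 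The paper circumvents this by working with the Tsitsiklis--Bertsekas--Athans limit weights $\phi^j_\tau$ of Lemma \ref{lemmetsitsi} and the agreement value $r^{\star}_t(\bx)=\sum_j\phi^j_0 Y^j_1+\sum_{\tau}\sum_j\phi^j_\tau s^j_\tau$, which satisfies the \emph{exact} driven recursion (\ref{av1})/(\ref{180114}) with no remainder at all; the $\phi^j_\tau$ are nonuniform, unknown, and only guaranteed to be bounded below by some $\eta>0$.

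The second problem is that even with the correct aggregate, the residual you wave away as ``telescoping'' is not small by telescoping. Replacing $r^j_t$ by the aggregate inside $H(\bZ^j_{t+1},\cdot)$ costs, at step $t$, a term of order $\xi_t/(th_t^d)$ where $\xi_t=\sum_{\tau<t}\rho^{t-\tau}/((\tau+1)h_{\tau+1}^d)$ is the disagreement bound; the naive sum of these costs can diverge (already for $h_t^d=t^{-\beta}$ with $\beta$ close to $1$). The paper isolates this residual as $M_t(\bx)$ in (\ref{M+M})--(\ref{MT}), keeps the damping product $\prod_{\ell}(1-\sum_j\mathbf 1_{[\ell-1\in T^j]}\phi^j_{\ell-1}\varepsilon^j_{\ell}K_{\ell})$ attached to each increment, and needs both the lower bound $\phi^j_\tau\geq\eta$, the liminf condition (\ref{25.4}), and the technical Lemma \ref{lemmetechnique} (where the hypotheses that $(th_t^d)$ is nondecreasing and $\sum_t 1/(t^2h_t^{2d})<\infty$ actually get used) to show $\mathbb E M_t^2\to 0$. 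Finally, the ``learning'' half that you defer to a standard bias--variance adaptation is done in the paper by recognizing the clean recursion $m^{\star}_t$ of (\ref{QJSM}) as a locally weighted average with explicit weights $W^i_{t,\tau}$ and verifying Stone's three conditions (Proposition \ref{HZ}); this is where Assumption {\bf 5(a)} (no fully idle time step) and the normalization $\sum_{i,\tau}W^i_{t,\tau}=1$ enter. You correctly flagged this reduction as the delicate part, but as written the proposal does not supply the two devices (limit weights $\phi^j_\tau$ and the damped residual $M_t$) that make it go through.
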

\begin{rem}
To avoid ambiguity, it is assumed by convention that $K_1(\cdot, \cdot)\equiv1$ and $h_1^d\leq L(0)$.
\end{rem}
A few comments are in order. We note that apart the boundedness assumption on $Y$, this convergence is universal, in the sense that it is true for all distributions of $(\bX,Y)$. Moreover, the requirements on the function $K_t$ are mild and typically satisfied for the kernel-type choice 
$$K_t(\bx,\bz)=\frac{1}{h_t^d}\mathbf 1_{[\|\bx-\bz\|/{h_t}\leq 1]}$$
 \citep[naive kernel---see, e.g.,][]{GyWa97}, or for the choice
 $$K_t(\bx,\bz)=\frac{1}{h_t^d}\,e^{-\|\bx-\bz\|^2/{h_t^2}}$$
 \citep[Gaussian kernel---see, e.g.,][]{St70} as soon as the distribution of $\bX$ has a density. In fact, the main message of Theorem \ref{bigtheo} is that the distributed and asynchronous procedure (\ref{distributed}) retains the nice consistency properties of its centralized counterpart \citep[][]{GyKoKr02}, while allowing to handle a much larger volume of data in a reasonable time. This important feature is illustrated in the next section, where a smart implementation of the method with 28 agents allows to reduce the processing of $n=10^6$ observations from more than 10 hours to less than 30 minutes. Section 4 will also reveal that distributing the calculations has no dramatic effect in terms of estimation accuracy. Indeed, numerical evidence shows that the collaboration mechanism does not degrade the convergence rate of a single processor execution---this is a nice feature, especially in situations where the distributed architecture is imposed by physical or geographical constraint.
\section{Implementation and numerical studies}
This section is devoted to the practical analysis and performance assessment of our consensus-based asynchronous distributed regression solution. To this aim, we wrote a software in Go, an open source native concurrent programming language developed at Google Inc. We exclusively relied on the standard library shipped by the language, thus avoiding any external dependency. The code is available under an open source license at the url \texttt{http://github.com/ryadzenine/dolphin}.  We stress that our goal with this software is not to deliver a turnkey solution for distributed computing, but rather to illustrate/simulate some of the essential features and issues encountered in the analysis of distributed systems.
\medskip

We start by introducing the software general architecture and the algorithms we used. Next, we describe the experiments that were carried out and discuss the numerical results. 
\subsection{Software architecture}
The implementation of the procedure carries some challenges. To begin with, for the method to scale, one needs to manage the communication overhead. More precisely, if too many messages are exchanged during the execution of the procedure, the available network bandwidth will quickly be consumed. If this happens, the number of agents (more appropriately called workers in this section) $M$ that the procedure can effectively run in parallel will be limited, which is clearly not the desired objective. Thus, some care needs to be taken when choosing the shape of the graph $(\mathcal M,\cup_{s \geq t} E_s)$. Moreover, the asynchronous nature of the model must be preserved, which forbids the use of any synchronization mechanism in the implementation. In particular, the general software design and the underlying algorithms must ensure that concurrent writes---in a single memory space---do not happen.  
\medskip

As illustrated in Figure \ref{Arch}, our software is built on top of several workers and a messaging system. In this architecture, each worker is a software component that handles the numerical computations, while the messaging system is a distributed component that allows the workers to communicate. The messaging system is composed of several queues, where each queue keeps track of the estimate values of all workers (possibly outdated). In our setup, each queue serves two twin workers by giving them the ability to either broadcast their local estimate value or get values from the other workers. In addition, the queues are connected in a so-called ring topology to form the messaging system. In this cyclic organization, each queue is only connected to two other queues of the system: It sends data to one of them and receives information from the other, in the direction of the arrow. 
\begin{figure}[!h]
\caption{\label{Arch} Software architecture}
\begin{center}
\includegraphics[scale=0.45 ]{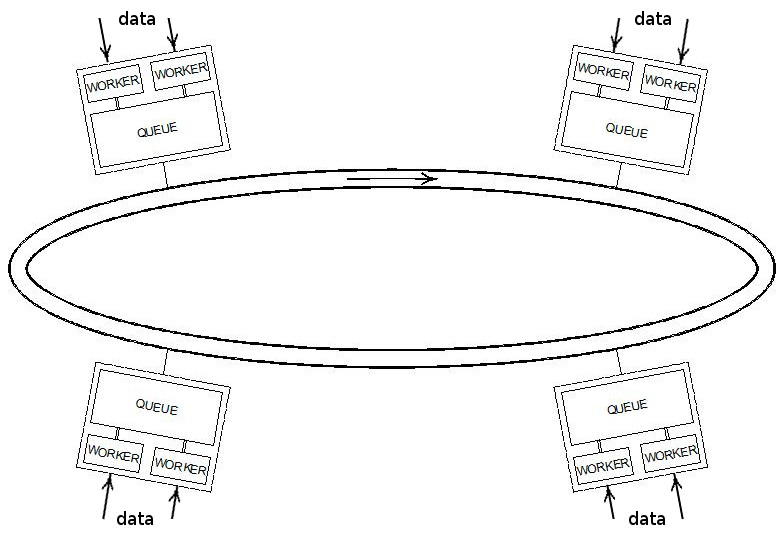}
\end{center}
\end{figure}
\medskip

The algorithm of a given worker component is easy. The worker just listens on a channel for new data to arrive, performs an iteration (\ref{distributed}), and then immediately broadcast his updated estimate value to the other workers through its serving queue. Besides, when the worker needs to perform an averaging step, it just asks its serving queue for the latest known estimate values of all the other workers, and then average only those values that were not used before. It should be noted that one worker and its twin perform iterations at the same rate, so that the averaging step of a given worker always involve at least the value transmitted by its twin to the queue. In turn, this averaging mechanism defines the families $(a^{ij}_t)_{t\geq 1}$ of weights.
\medskip

Let us now describe the way queues perform and communicate. A given queue, say $Q$, keeps in its memory the values of all the workers, possibly outdated (in our implementation, this is achieved by using a so-called associative container, or hash-map). The queue $Q$ constantly listens for a message arriving from the preceding queue in the ring, and updates its local memory once such a message is received. When this happens, $Q$ immediately sends its content to the next queue in the ring, and so on. In parallel, $Q$ listens to its twin workers and instantly updates the corresponding values each time these workers send information. At the software startup, the distributed messaging system is initialized by choosing one queue at random. This queue then sends its local memory content to the next queue in the ring, and the process starts.
\medskip

Thus, in this decentralized architecture, the workers perform at their own paces, independently of one another, and asynchronously exchange information via the messaging system. The implementation verifies  Assumptions ${\bf 1}$ to ${\bf 4}$. More precisely, the graph $(\mathcal M,\cup_{s \geq t} E_s)$ is strongly connected as required by Assumption ${\bf 2}$, and the constant $B_2$ of Assumption ${\bf 4}$ is guaranteed to be finite. Note however that $B_2$ cannot be set by hand. In fact, it depends on the performance of the underlying physical network. In addition, the initialization policy of the distributed messaging system gives us the guarantee that, at any time $t$, only two queues of the messaging system are communicating. As a consequence, only one queue is partially updating its associative container. Therefore, concurrent writes never happen in the distributed messaging system.
\medskip

The software was benchmarked on a computer with $16$ $\mbox{Intel}^{\scriptsize{\textregistered}}$ Xeon E5-4620 (2.2 GHz) processors, with four cores each. The computer is also equipped of $256$ GB of RAM. Noteworthy, each worker and each queue where launched on their own thread of execution. 
\medskip   

\subsection{ Numerical results} 
We had to fix some parameters of the estimate (\ref{distributed}) to carry out the numerical experimentations. To begin with, we consider a constant $\tau$ (which we call the metronome) whose aim is to regulate the behavior of the workers. More precisely, every worker does an averaging step for each $\tau-1$ computation steps. Precisely, for a worker $i \in \{1, \hdots, M\}$, the set $T^i$ containing the time instants where the worker performs a computation is just defined as the complementary set of $\{k \in \mathbb{N}^{\star} : k \equiv 0  \pmod \tau  \}$. 
\medskip

As for the estimate itself, we used a Gaussian kernel, of the form
 $$K_t(\bx,\bz)=\frac{1}{h_t^d}\,e^{-\|\bx-\bz\|^2/{h_t^2}},\quad \bx,\bz \in \mathbb R^d,$$
where $\|\cdot\|$ is the Euclidean norm. The smoothing parameter $h_t$ was set to $t^{- \frac{d}{d+4}}$ \citep[this choice is in line with the results of][in dimension $d=1$]{MoPeSl09}.  Finally, we let the constants $C_1$ and $C_2$ of Assumption ${\bf 3}$ be equal to $1$. We realize that other, eventually data-dependent, parameter choices are possible. However, our goal in this section is more to highlight the scaling capabilities of the algorithm (that is, its ability to deal with a large amount of data) rather than assessing its statistical performance. 
\medskip

The procedure was benchmarked on three synthetic datasets generated by the following models (we set $\bX=(X_1, \hdots, X_d)$ and let $\mathcal{N}(\mu, \sigma^2)$ be a Gaussian random variable with mean $\mu$ and variance $\sigma^2$):  
\begin{align*}
\text{\bf Model 1: } & d=2, & Y &= X^2_1 +\exp( - X^2_2).&
\\
\text{\bf Model 2: } & d=4, & Y &= X_1X_2 + X_3^2 - X_4 +  \mathcal{N}(0,0.05). &
\\
\text{\bf Model 3: } & d=4, & Y &= \mathbf 1_{[X_1 > 0]} + \mathbf 1_{[X_4 -X_2 > 1 + X_3]} + X^3_2 +\exp( - X^2_2) \\
&\quad & & \quad + \mathcal{N}(0,0.05). &
\end{align*}
In order to keep $Y$ bounded, all values of $|Y|$ above 1 were discarded.  For each model, two designs were considered: Uniform over $(0,1)^d$ and Gaussian with mean $0$ and covariance matrix $\varSigma$ with $\varSigma_{ij} = 2^{-\mid i-j\mid}$. In addition, the benchmarks were carried out with a number of workers $M$ ranging from $1$ to $28$. Since a typical workstation has between 4 and 8 processors, it should be noted that these experimental values of $M$ cover a wide range of practical cases. In order to assess the impact of the averaging step, two values were tested for the metronome: $\tau$= $2$ (high frequency message passing) and $\tau=M^2$ (low frequency message passing).
\medskip  

Each simulated dataset contains $10^6$ observations and is split into $M$ training sets $\mathcal A_i$, $i \in \{1, \hdots, M\}$, and a test set $\mathcal T$. The test set contains $20\%$ of the data and the remaining examples are uniformly distributed between the training sets $\mathcal A_i$. For every pair $(\bx, y)$ in $\mathcal T$, and for every worker $i$, we train the estimate $r^i(\bx)$ on the data sequentially acquired from $\mathcal A_i$, and finally evaluate the $L^2$ error of $i$ via the formula
\[
	\mbox{\sc err}_i=\sum_{(\bx,y) \in \mathcal T} \left(y-r^i(\bx)\right)^2.
\]     
 For each experiment and each $i$, $\mbox{\sc err}_i$ was measured every $5$ seconds by stopping the corresponding worker,  which was then immediately resumed. 
\medskip  

Figure \ref{tabtimes} shows the computing time with different values of $M$ and $\tau$ for {\bf Model 1} and the uniform design. This figure also contains a bar labeled ``{\it Optimal}'' which corresponds to the time a parallelized procedure with no communication overhead would take to process the entire dataset. It is defined as the time the estimate with one single worker takes to process the entire dataset divided by the number of workers. 
\begin{figure}[!h]
\caption{Relative computing times ({\bf Model 1}, uniform design)}
\label{tabtimes}
\includegraphics[scale=0.64]{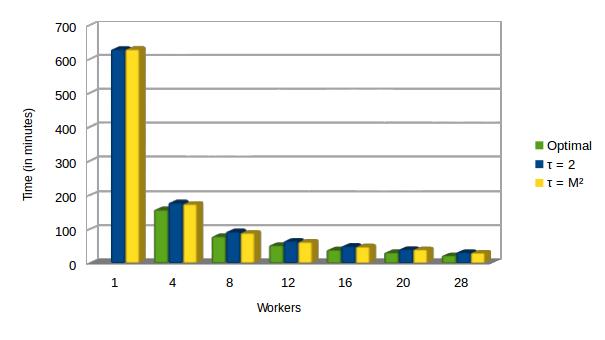}
\end{figure}
We notice that the computing time significantly decreases when more workers are available. In fact, the procedure can shrink the computing time by an order of magnitude, passing from 10:30 hours of calculation with 1 processor to less than 30 minutes with 28 workers! We also see that the decrease of computing time is close to the best one could get by adding more computing power when $M$ is small. However, this is no more true when $M$ gets larger, because of the overhead introduced by the averaging step, which grows linearly with $M$. Finally, we also observe the low impact of the different values of $\tau$ on the final computing time. This set of remarks also holds for {\bf Model 2}, {\bf Model 3}, and the Gaussian design (not shown). This is coherent, since the model choice has in fact no impact on the scaling properties of the algorithm.
\medskip

Figure \ref{figmodel2_u} depicts a typical temporal evolution of the empirical $L^2$ error (av\-er\-ag\-ed over all workers) for {\bf Model 1} and the uniform design (the same patterns are observed for the other models). 
\begin{figure}[!!h]
\caption{A typical evolution of the empirical error ({\bf Model 1}, unifo\-rm design)}
\label{figmodel2_u}
\includegraphics[scale=0.74]{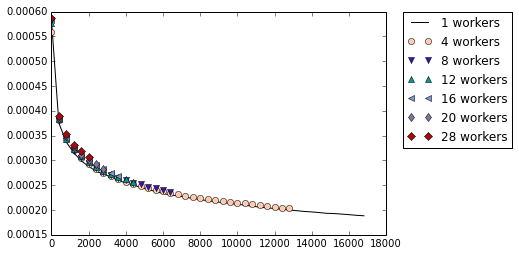}
\end{figure}
As predicted by the theory, consensus and convergence happen for every value of $M$. Figures \ref{figdiff2_u}-\ref{figdiffM_u} show with more details the relative effects of our model when compared to a basic R\'ev\'esz-type estimate (obtained by taking $M=1$). Denoting by $\mbox{\sc err}$ the empirical error of this basic online estimate and by $\mbox{\sc err}_1,\hdots,\mbox{\sc err}_M$ the respective errors of the $M$ workers of the asynchronous distributed solution, we computed every $5$ seconds the quantity
$$
\mbox{\sc relative gain}=\frac{\mbox{\sc err} - \frac{1}{M}\sum_{i=1}^{M}\mbox{\sc err}_i}{\mbox{\sc err}}.$$
(A small negative value of the {\sc relative gain} means that the distributed model performs almost as well as the non-distributed one in terms of estimation accuracy). The main message here is that our distributed procedure does not seem to deteriorate the R\'ev\'esz-type estimate ($M=1$) convergence rate---the degradation is typically negligible, of the order of $2\%$ with some peaks around $5 \%$ in {\bf Model 1}. 
\begin{figure}[!!h]
\caption{{\sc relative gain} ({\bf Model 1}, uniform design)}
\label{figdiff2_u}
\includegraphics[scale=0.44]{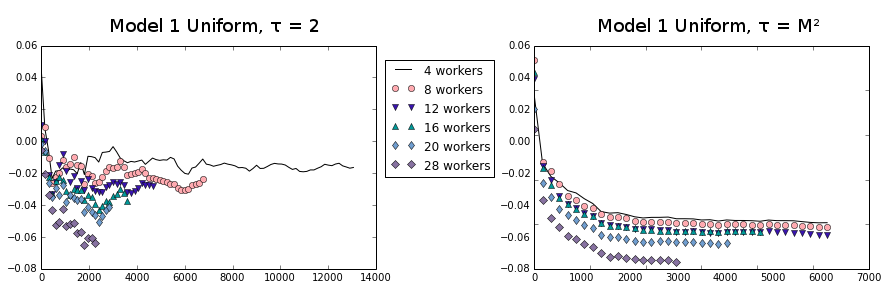}
\end{figure}
\begin{figure}[!!h]
\caption{{\sc relative gain} ({\bf Model 2}, Gaussian design)}
\label{figdiff2_nn}
\includegraphics[scale=0.44]{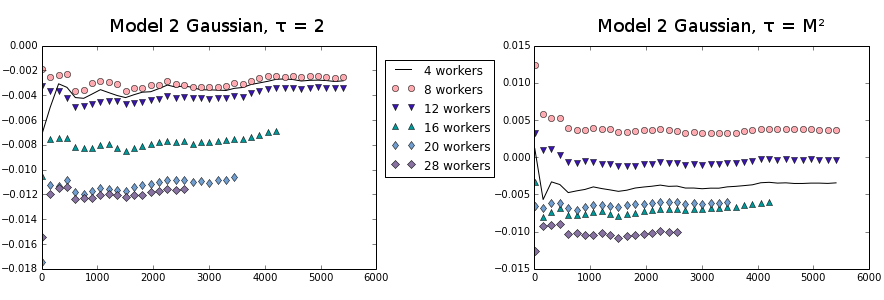}
\end{figure}
\begin{figure}[!!h]
\caption{{\sc relative gain} ({\bf Model 3}, Gaussian design)}
\label{figdiffM_u}
\includegraphics[scale=0.44]{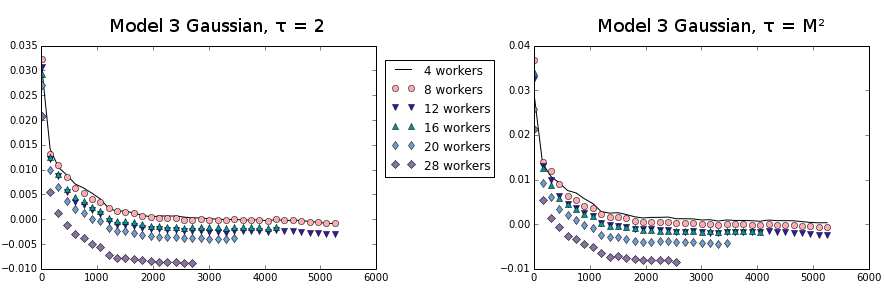}
\end{figure}
\section{Proof of Theorem \ref{bigtheo}}
\subsection{Some preliminary results}
Procedure (\ref{distributed}) falls in the general model for distributed and asynchronous computation presented by \citet{TsBeAt86}, and analyzed by these authors in the context of deterministic and stochastic gradient-type algorithms \citep[see also][]{Ts84,BeTs89}. This model has the following format:
\begin{equation}
\label{tsitsi}
z^i_{t+1}=\sum_{j=1}^M a^{ij}_tz^j(\tau^{ij}_t)+s^i_t, \quad \mbox{for all } i \in \{1, \hdots, M\} \mbox{ and all } t \geq 1,
\end{equation}
where the value $z^i_t$ is held by agent $i$ at time $t$, starting with some initial $z^i_1$ (as before, we let $z^j(\tau^{ij}_t)=z^j_{\tau^{ij}_t}$).
The term $s^i_t$ is a general computation step, whose form is not specified in this subsection. 
\medskip

Equation (\ref{tsitsi}), which defines the structure of the algorithm, is a linear system driven by the steps $(s^i_t)_{t\geq 1}$. In the special case where communication delays are zero, we have $\tau^{ij}_t=t$, and (\ref{tsitsi}) becomes a linear system with state vector $(z^1_t, \hdots, z^M_t)$. In general, however, the presence of communication delays necessitates a more involved analysis. Exploiting linearity, it is easy to conclude that, for each $t\geq 1$,  there exist scalars $\phi^{ij}(t,0), \hdots, \phi^{ij}(t,t-1)$ such that
\begin{equation}
\label{301213}
z^i_t=\sum_{j=1}^M \phi^{ij}(t,0)z^j_1+\sum_{\tau=1}^{t-1}\sum_{j=1}^M \phi^{ij}(t,\tau)s^j_\tau.
\end{equation}
The coefficients $(\phi^{ij}(t,\tau))_{t\geq 1, 0\leq \tau \leq t-1}$ do not depend upon the values taken by the computation terms $s^i_t$. They are determined by the sequence of transmission and reception times and the combining coefficients. Consequently, they are unknown, in general. Nevertheless, they have the following qualitative properties:
\begin{lem}[\citealp{TsBeAt86}] 
\label{lemmetsitsi}
Let the arrays $(\phi^{ij}(t,\tau))_{t\geq 1, 0\leq \tau \leq t-1}$ be defined as in (\ref{301213}). Then:
\begin{enumerate}
\item If Assumption {\bf 1} is satisfied, then $\phi^{ij}(t,\tau)\geq 0$  and $0 \leq\sum_{j=1}^M\phi^{ij}(t,\tau)\leq 1$, for all $(i,j) \in \{1, \hdots, M\}^2$ and all $0\leq \tau \leq t-1$.
\item Assume that Assumptions {\bf 1-4} are satisfied. Then the following statements are true:
\begin{enumerate}
\item For all $(i,j) \in \{1, \hdots, M\}^2$ and all $\tau\geq 0$, the limit of $\phi^{ij}(t,\tau)$ as $t$ tends to infinity exists. This limit is independent of $i$ and is denoted by $\phi^j_\tau$. 
\item There exists a constant $\eta>0$ such that $\phi^j_\tau\geq \eta$, for all $j \in \{1, \hdots, M\}$ and all $\tau \geq 0$.
\item There exists a constant $A>0$ and $\rho \in(0,1)$ such that, for all $(i,j)\in \{1, \hdots,M\}^2$ and all $0\leq \tau \leq t-1$,
$$\left | \phi^{ij}(t,\tau)-\phi^j_\tau\right|\leq A \rho^{t-\tau}.$$
\end{enumerate}
\end{enumerate}
\end{lem}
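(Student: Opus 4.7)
The plan is to recast the delayed recursion (\ref{tsitsi}) as an undelayed linear system $\tilde z_{t+1}=\tilde A_t\tilde z_t+\tilde s_t$ driven by a product of row-stochastic nonnegative matrices, and then to read off all three statements from the classical weak-ergodicity theory for such products. Since Assumption~\textbf{2(d)} bounds the communication delays by $B_1$, the first step is to augment the state at time $t$ into $\tilde z_t=(z^1_t,\dots,z^M_t,z^1_{t-1},\dots,z^M_{t-B_1})^\top$. The top $M$ rows of $\tilde A_t$ carry the combining weights (the entry at the augmented slot $(j,t-\tau^{ij}_t)$ is $a^{ij}_t$), while the remaining $MB_1$ rows implement the deterministic shift. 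Assumption~\textbf{1(a)} together with the convention $\tau^{ii}_t=t$ from Assumption~\textbf{2(c)} ensures that $\tilde A_t$ is nonnegative and row-stochastic, and unrolling the recursion yields $\phi^{ij}(t,\tau)=[\tilde A_{t-1}\cdots\tilde A_{\tau+1}]_{(i,0),(j,0)}$. Part~\textbf{1} is then immediate: products of row-stochastic nonnegative matrices remain row-stochastic, hence every entry of $\tilde A_{t-1}\cdots\tilde A_{\tau+1}$ lies in $[0,1]$, and restricting the row sum to the top-block columns $(j,0)$ alone gives $\sum_{j=1}^M\phi^{ij}(t,\tau)\le 1$.

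For Parts~\textbf{2(a)} and \textbf{2(c)}, the key tool is Dobrushin's coefficient of ergodicity $\delta(P):=\tfrac12\max_{i_1,i_2}\sum_\ell|P_{i_1\ell}-P_{i_2\ell}|$, which is submultiplicative on row-stochastic matrices. The crucial step is a scrambling lemma: there is a window length $L=L(M,B_1,B_2,\alpha)$ such that, for every $t\ge 1$, $\delta(\tilde A_{t+L-1}\cdots\tilde A_t)\le 1-\alpha^L$. The choice of $L$ combines the strong connectivity of $\bigcup_{s\ge t}E_s$ (Assumption~\textbf{3}) with the intercommunication bound $B_2$ (Assumption~\textbf{4}) to guarantee that within $L$ steps any ordered pair of agents is linked by a communication path, together with the self-weight bound (Assumption~\textbf{1(b)}) to ensure that the entry accumulated along such a path in the augmented matrix is at least $\alpha^L$. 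Submultiplicativity of $\delta$ then yields $\delta(\tilde A_{t-1}\cdots\tilde A_{\tau+1})\le A\rho^{t-\tau}$ with $\rho=(1-\alpha^L)^{1/L}\in(0,1)$, from which $|\phi^{i_1j}(t,\tau)-\phi^{i_2j}(t,\tau)|\le 2A\rho^{t-\tau}$, making $(\phi^{ij}(t,\tau))_t$ Cauchy with a limit $\phi^j_\tau$ independent of $i$ and satisfying the geometric rate of Part~\textbf{2(c)}.

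For Part~\textbf{2(b)}, let $m(t):=\min_i\phi^{ij}(t,\tau)$. The recursion $\phi^{ij}(t+1,\tau)=\sum_k a^{ik}_t\phi^{kj}(\tau^{ik}_t,\tau)$ expresses each $\phi^{ij}(t+1,\tau)$ as a convex combination of $\phi^{kj}(s,\tau)$ with $s\in[t-B_1,t]$, so the running-window minimum $W(t):=\min_{s\in[t-B_1,t]}m(s)$ is nondecreasing in $t$. It thus suffices to exhibit a time $t_0=\tau+L+B_1$ at which $W(t_0)\ge\eta$ for a universal $\eta>0$, since the envelope property then propagates this bound to the limit. The value $\eta$ comes from an explicit path construction: by Assumptions~\textbf{3}--\textbf{4}, there is a combining-weight path of length at most $L$ from agent $j$ to any agent $i$ through communications within $[\tau,\tau+L]$, each weight being at least $\alpha$ by Assumption~\textbf{1(c)}; padding with self-loops of weight at least $\alpha$ (Assumption~\textbf{1(b)}) to meet any prescribed target time $s\in[\tau+L,\tau+L+B_1]$ gives $\phi^{ij}(s,\tau)\ge\alpha^{L+B_1}=:\eta$, uniformly in $i$, $j$, and $\tau$.

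The main obstacle is the scrambling estimate underlying Part~\textbf{2}. The shift rows of $\tilde A_t$ are not themselves scrambling, so every bit of mixing must come from the top-$M$ combining rows, and one must verify that over a window whose length depends only on $M,B_1,B_2,\alpha$, the delayed combining operations, the self-loops, the persistent edges and the graph connectivity all conspire to produce a common positive column for every pair of rows of $\tilde A_{t+L-1}\cdots\tilde A_t$. Once this path-construction step is carried out carefully in the augmented state, the remainder of the argument reduces to submultiplicativity of $\delta$ and the elementary monotone-envelope analysis for the running minimum that drives Part~\textbf{2(b)}.
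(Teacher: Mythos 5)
The paper does not prove this lemma: it is imported verbatim from \citet{TsBeAt86} (see also Tsitsiklis's 1984 thesis and \citealp{BlHeOlTs05}), so there is no in-paper argument to compare yours against. Your reconstruction follows exactly the canonical route of that literature --- augment the state by the last $B_1$ values to absorb the delays, write $\phi^{ij}(t,\tau)$ as an entry of a product of nonnegative row-stochastic matrices, get Part 1 from row-stochasticity, Parts 2(a) and 2(c) from submultiplicativity of an ergodicity coefficient over scrambling windows, and Part 2(b) from an explicit $\alpha$-weighted path plus a monotone running-window minimum. All of that is sound, and the identity $\phi^{ij}(t,\tau)=[\tilde A_{t-1}\cdots\tilde A_{\tau+1}]_{(i,0),(j,0)}$ together with the reduction of a shift row $(j,k)$ to the top-block row $(j,0)$ of the shorter product $\tilde A_{t+L-1}\cdots\tilde A_{t+k}$ is the right bookkeeping. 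Two small points: Part 1 assumes only Assumption \textbf{1}, under which delays need not be bounded, so the fixed-size augmentation is not available there --- but the direct induction on $\phi^{ij}(t+1,\tau)=\sum_k a^{ik}_t\phi^{kj}(\tau^{ik}_t,\tau)$ gives Part 1 without it; and early times need a convention for slots reaching back before $t=1$.

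The one substantive incompleteness is the scrambling estimate itself, which you correctly identify as ``the main obstacle'' but then only describe rather than execute. That estimate \emph{is} the lemma: everything else (Dobrushin submultiplicativity, the Cauchy argument, the envelope for the running minimum) is routine once $\delta(\tilde A_{t+L-1}\cdots\tilde A_t)\leq 1-\alpha^L$ is in hand. Carrying it out requires a genuine induction along communication paths: Assumption \textbf{3} only guarantees strong connectivity of the edges occurring infinitely often, Assumption \textbf{4} converts this into an appearance of each such edge in every window of length $B_2+1$, each hop costs up to $B_1$ extra steps through the shift block before the transported mass re-enters the top block, and positivity must be preserved between hops via the self-weights $a^{ii}_t\geq\alpha$ of Assumption \textbf{1(b)} (noting that at computing times Assumption \textbf{2(a)} forces $a^{ij}_t=\mathbf 1_{[i=j]}$, which still leaves the needed self-loop). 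Only after verifying that a window of length $L=O\bigl(M(B_1+B_2+1)\bigr)$ yields a column that is uniformly at least $\alpha^{L}$ for \emph{every} row of the augmented product --- including the shift rows, via the reduction above with a window shortened by at most $B_1$ --- do Parts 2(a)--(c) follow. As a blind proof your submission is therefore a correct and well-organized plan whose hardest step is asserted with the right ingredients but not proved; to stand on its own it needs that path construction written out, essentially reproducing Lemma 5.2.1 of \citet{TsBeAt86}.
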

\begin{rem}
\label{normalization}
It is a simple but useful exercise to prove that $\sum_{j=1}^M\phi^{ij}(t,0)=1$, for all $i \in \{1, \hdots, M\}$ and all $t\geq1$. Consequently, letting $t\to \infty$, we see that $\sum_{j=1}^M\phi_0^j=1$. Also, for all $\tau \geq 0$, $0 \leq\sum_{j=1}^M\phi^{j}_{\tau}\leq 1$.
\end{rem}

The pioneering ideas of \citet{TsBeAt86} have been further explored by \citet{BlHeOlTs05} in the simplified context of a so-called agreement algorithm of the form
\begin{equation}
\label{agreement}
z^i_{t+1}=\sum_{j=1}^M a^{ij}_tz^j(\tau^{ij}_t), \quad \mbox{for all } i \in \{1, \hdots, M\} \mbox{ and all } t \geq 1.
\end{equation}
The following result expresses the fact that Assumptions {\bf 1-4} are sufficient for the agents of model (\ref{agreement}) to reach an asymptotic consensus.
\begin{theo}[\citealp{BlHeOlTs05}]
\label{theoblondel}
Consider the agreement model (\ref{agreement}), and assume that Assumptions {\bf 1-4} are satisfied. Then there exists a consensus value $z^{\star}$ (independent of $i$) such that, for all $i\in \{1, \hdots, M\}$,
$$z^i_t \to z^{\star}\quad \mbox{ as }t \to \infty.$$
\end{theo}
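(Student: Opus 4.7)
The plan is to observe that the agreement model $(\ref{agreement})$ is exactly the special case of the general asynchronous recursion $(\ref{tsitsi})$ obtained by setting $s^i_t\equiv 0$ for every $i$ and $t$. Consequently, the explicit representation $(\ref{301213})$ collapses to its initial-condition term, giving
$$z^i_t=\sum_{j=1}^M\phi^{ij}(t,0)\,z^j_1$$
for every $i\in\{1,\hdots,M\}$ and every $t\geq 1$. All of the asynchronism, delays, and time-varying combination weights are now packaged into the deterministic scalars $\phi^{ij}(t,0)$, and the problem is reduced to showing that the matrix of coefficients $\phi^{ij}(t,0)$ has a rank-one limit as $t\to\infty$.

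The second step is to invoke Lemma \ref{lemmetsitsi}, part 2(a), which is proved under precisely Assumptions \textbf{1}--\textbf{4}. That lemma asserts that for every fixed $j$ and every fixed $\tau\geq 0$, the limit $\lim_{t\to\infty}\phi^{ij}(t,\tau)=\phi^j_\tau$ exists and \emph{does not depend on $i$}. Specializing to $\tau=0$ and passing to the limit termwise in the finite sum above yields
$$\lim_{t\to\infty}z^i_t=\sum_{j=1}^M\phi^j_0\,z^j_1,$$
which is the same real number for every index $i$. Defining
$$z^{\star}:=\sum_{j=1}^M\phi^j_0\,z^j_1,$$
we obtain $z^i_t\to z^{\star}$ as $t\to\infty$, with $z^{\star}$ independent of $i$, which is the claim. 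Remark \ref{normalization} (noting $\sum_j\phi^j_0=1$) further shows that the consensus value is a convex combination of the initial states $z^j_1$, consistent with the agreement interpretation.

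There is essentially no obstacle left once Lemma \ref{lemmetsitsi} is in hand, since the argument is only two lines: substitute $s^i_t\equiv 0$ into $(\ref{301213})$ and take limits. The real difficulty is buried inside that lemma, whose proof (due to Tsitsiklis, Bertsekas, and Athans) requires studying the backward products of stochastic matrices defined by the $a^{ij}_t$ together with the delay bookkeeping and using Assumptions \textbf{3} and \textbf{4} to guarantee strong ergodicity of the product. If one preferred a self-contained route without citing Lemma \ref{lemmetsitsi}, the cleanest alternative would be a Lyapunov-style argument on the span $V_t=\max_i z^i_t-\min_i z^i_t$: Assumption \textbf{1} and the convex-combination structure make $V_t$ nonincreasing, and Assumptions \textbf{2}, \textbf{3}, \textbf{4} together with the $\alpha$-lower bound force a uniform geometric contraction of $V_t$ over any window of length proportional to $M(B_1+B_2)$, implying $V_t\to 0$ and hence consensus; the common limit value $z^{\star}$ would then be identified via the already-established coefficients $\phi^j_0$. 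Given the machinery already developed in the excerpt, however, the one-line deduction from Lemma \ref{lemmetsitsi} is clearly the preferred route.
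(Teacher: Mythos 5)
Your argument is correct, but it is worth being clear about what the paper actually does here: Theorem \ref{theoblondel} is imported from \citet{BlHeOlTs05} and is given no proof at all in the text, so there is no ``paper proof'' to match step for step. That said, your route --- set $s^i_t\equiv 0$ in (\ref{tsitsi}), collapse the representation (\ref{301213}) to $z^i_t=\sum_{j=1}^M\phi^{ij}(t,0)z^j_1$, and let Lemma \ref{lemmetsitsi}, part 2(a), send each coefficient to the $i$-independent limit $\phi^j_0$ --- is exactly the mechanism the authors themselves deploy a few lines later when they pass to the limit in (\ref{301213}) to obtain (\ref{301213-2}); in that sense your proof makes explicit that, given Lemma \ref{lemmetsitsi}, Theorem \ref{theoblondel} is an immediate corollary (and even identifies the consensus value as the convex combination $\sum_j\phi^j_0 z^j_1$, via Remark \ref{normalization}, which the bare statement of the theorem does not). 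The one caveat, which you correctly flag yourself, is that this is a reduction of one cited result to another rather than a self-contained proof: all the analytic content (geometric convergence of the backward products under Assumptions \textbf{1}--\textbf{4}) lives inside Lemma \ref{lemmetsitsi}, which is itself quoted from \citet{TsBeAt86} without proof. Your sketched alternative via the span $V_t=\max_i z^i_t-\min_i z^i_t$ is the standard self-contained route (and closer to what Blondel et al.\ actually do), but as written it is only a sketch; within the logical economy of this paper, the two-line deduction from Lemma \ref{lemmetsitsi} is the right choice.
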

Let us consider again the general model (\ref{tsitsi}). Fix a time instant $t_0\geq 1$, and assume that the processors stop computing after time $t_0$ (that is, $s^i_t=0$ for all $t\geq t_0$), but keep communicating and combining. Then equation (\ref{tsitsi}) takes the form
\begin{equation*}
z^i_{t+1}=\sum_{j=1}^M a^{ij}_tz^j(\tau^{ij}_t), \quad \mbox{for all } i \in \{1, \hdots, M\} \mbox{ and all } t \geq t_0.
\end{equation*}
Thus, in that case, Theorem \ref{theoblondel} shows that the iterative process asymptotically reaches a consensus value, depending upon $t_0$. Call this limiting scalar $z^{\star}_{t_0}$. Thus, according to Lemma \ref{lemmetsitsi}, taking the limit in $t$ on both sides of identity (\ref{301213}), we have
\begin{equation}
\label{301213-2}
z^{\star}_t=\sum_{j=1}^M\phi^j_0z^j_1+\sum_{\tau=1}^{t-1}\sum_{j=1}^M\phi^j_\tau s^j_\tau, \quad \mbox{for all }t \geq 1.
\end{equation}
We note that the definition of $z^{\star}_t$ does not imply any assumption on the computation terms. Also, one easily verifies that the agreement sequence $(z^{\star}_t)_{t \geq 1}$ satisfies the following recursion formula:
\begin{equation}
\label{av1}
\left \{ \begin{array}{llll}
z^{\star}_{1}& = & \sum_{j=1}^M\phi^j_0z_1^{j}&\\
z^{\star}_{t+1}&=&z^{\star}_t+\sum_{j=1}^M\phi^j_t s^{j}_{t}&\mbox{for $t \geq 1$}.
\end{array}
\right .
\end{equation}
The scalar $z^{\star}_t$ is the value at which all processors would asymptotically agree if they were to stop computing (but keep communicating and combining) at a time $t$. It may be viewed as a concise global summary of the state of computation at time $t$, in contrast to the $z^i_t$'s, which are the local states of computation.  
\subsection{Proof of Theorem \ref{bigtheo}}
The proof of Theorem \ref{bigtheo} starts with the observation that the distributed architecture (\ref{distributed}) under study is but a special case of model (\ref{tsitsi}), with
\begin{equation*}
s^i_t= \left \{ \begin{array}{ll}
-\varepsilon_{t+1}^iH\left(\bZ^i_{t+1},r_t^i(\bx)\right) &  \mbox{if $t \in T^i$}\\
0 & \mbox{otherwise},
\end{array}
\right .
\end{equation*}
where we recall that $\bZ^i_t=(\bX^i_t,Y^i_t)$ and $H(\bZ^i_{t+1},r^i_t(\bx))=r_t^i(\bx)K_{t+1}(\bx,\bX^i_{t+1})-Y^i_{t+1}K_{t+1}(\bx,\bX^i_{t+1})$. The set $T^i$ contains all time instants where processor $i$ is effectively computing. In particular, identity (\ref{av1}) guarantees the existence of an agreement sequence $(r_t^{\star}(\bx))_{t \geq 1}$ satisfying the recursion
\begin{equation}
\label{180114}
\left \{ \begin{array}{llll}
r^{\star}_1(\bx) &=&\sum_{j=1}^M\phi^j_0Y^j_1  &\\
r^{\star}_{t+1}(\bx)&=&r^{\star}_t(\bx)-\sum_{j=1}^M \mathbf 1_{[t \in T^j]}\phi^j_t\varepsilon_{t+1}^jH\left(\bZ^j_{t+1},r^j_t(\bx)\right)& \mbox{for } t \geq 1,
\end{array}
\right .
\end{equation}
where the $[0,1]$-valued functions $\phi^j_t$ are defined in Lemma \ref{lemmetsitsi}. The limiting sequence $(r^{\star}_t(\bx))_{t \geq 1}$ plays a central role in the proof of Theorem \ref{bigtheo}. 
\medskip

The following lemma ensures that whenever $Y$ is bounded, then so are the sequences $(r^i_t(\bx))_{t \geq 1}$ and $(r^{\star}_t(\bx))_{t \geq 1}$.
\begin{lem}
\label{tKtlemma}
Assume that Assumptions {\bf 1-4} are satisfied. Assume, in addition, that $|Y|\leq \gamma$, and that
\begin{equation}
\label{tKt}
\sup_{t,\bx,\bz} \varepsilon^i_t K_t(\bx,\bz)\leq1,\quad \mbox{for all $i \in \{1, \hdots, M\}$}.
\end{equation}
Then $\sup_{\bx \in \mathbb R^d}|r^i_t(\bx)| \leq \gamma$, for all $i \in \{1, \hdots, M\}$ and all $t\geq 1$. Moreover, $\sup_{\bx \in \mathbb R^d}|r^{\star}_t(\bx)| \leq \gamma$, for all $t\geq 1$.
\end{lem}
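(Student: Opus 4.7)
The plan is to establish the two bounds separately: a direct induction for the local estimates $r^i_t$, and an indirect ``frozen-dynamics'' argument for the consensus sequence $r^\star_t$.

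For the bound on $r^i_t(\bx)$, I would induct on $t$. The base case $r^i_1(\bx)=Y^i_1$ immediately gives $|r^i_1(\bx)|\leq \gamma$. For the inductive step I would split on whether $t\in T^i$. If $t\notin T^i$, then $s^i_t=0$ and the update \eqref{distributed} reduces to $r^i_{t+1}(\bx)=\sum_{j=1}^M a^{ij}_t\,r^j(\bx,\tau^{ij}_t)$, a convex combination (Assumption 1(a)) of values indexed by earlier times $\tau^{ij}_t\leq t$ (Assumption 2(d)), each bounded by $\gamma$ by the induction hypothesis. If instead $t\in T^i$, Assumption 2(a) forces agent $i$ to put all its combining weight on itself and Assumption 2(c) gives $\tau^{ii}_t=t$, so the update collapses to the centralized R\'ev\'esz step $r^i_{t+1}(\bx)=\bigl(1-\varepsilon^i_{t+1}K_{t+1}(\bx,\bX^i_{t+1})\bigr)\,r^i_t(\bx)+\varepsilon^i_{t+1}K_{t+1}(\bx,\bX^i_{t+1})\,Y^i_{t+1}$, which is again a convex combination thanks to \eqref{tKt}; both $|r^i_t(\bx)|$ and $|Y^i_{t+1}|$ are at most $\gamma$, hence so is $|r^i_{t+1}(\bx)|$.

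The bound on $r^\star_t$ is the more delicate part. The recursion \eqref{180114} does not visibly preserve any convex-combination structure: the weights $\phi^j_t$ only sum to at most $1$ (not exactly $1$), and the update mixes $r^\star_t$ with the local $r^j_t$'s through coefficients $\phi^j_t\varepsilon^j_{t+1}K_{t+1}$ in a way that is not obviously contractive, so a naive triangle inequality loses a factor. Rather than fight this directly, I would invoke the very characterization of $r^\star_t$ recalled in Section 5.1: $r^\star_t(\bx)$ is the asymptotic agreement value of the modified system obtained by freezing the computations after time $t$, i.e.\ setting $\widetilde s^j_s=0$ for all $s\geq t$ while leaving the coefficients $a^{ij}_s$ and delays $\tau^{ij}_s$ untouched. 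Let $\widetilde r^i_s$ denote these frozen iterates. Then $\widetilde r^i_t=r^i_t$ is bounded by $\gamma$ by the first part, and for every $s\geq t$ the frozen update $\widetilde r^i_{s+1}(\bx)=\sum_j a^{ij}_s\,\widetilde r^j(\bx,\tau^{ij}_s)$ is a pure convex combination of previously-bounded values, so a trivial induction propagates the bound $\gamma$ to every $\widetilde r^i_s$. Since Assumptions 1--4 still hold verbatim for the frozen system, Theorem \ref{theoblondel} gives $\widetilde r^i_s(\bx)\to r^\star_t(\bx)$ as $s\to\infty$, and the uniform bound $\gamma$ survives the passage to the limit.

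The main conceptual obstacle is this second step: one must resist the temptation to bound $r^\star_t$ inductively from \eqref{180114} and instead recognize it as the asymptotic consensus of a pure agreement process started from the already-bounded state $(r^1_t,\dots,r^M_t)$. Once this viewpoint is adopted, both claims reduce to bookkeeping with convex combinations.
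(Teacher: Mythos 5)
Your proof is correct and takes essentially the same route as the paper: an induction with a case split on $t\in T^i$ (convex combination in both branches, using \eqref{tKt} for the computing step) for the local estimates, and the characterization of $r^{\star}_t(\bx)$ as the agreement limit of the frozen system---where every update is a pure convex combination so the bound $\gamma$ passes to the limit---for the second claim. Your explicit warning against trying to bound $r^{\star}_t$ directly from the recursion \eqref{180114} is a useful elaboration of what the paper leaves implicit, but it is the same argument.
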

\begin{proof}[Proof of Lemma \ref{tKtlemma}]
We know, by definition of the set $T^i$, that $s^i_t=0$ whenever $t \not\in T^i$. Furthermore, according to Assumption {\bf 2(a)}, $a^{ij}_t=\mathbf 1_{[i \neq j]}$ for $t\in T^i$. Thus,
\begin{equation*}
{\small
\left \{ \begin{array}{llll}
r^i_{t+1}(\bx) &\!\!=\!\!& r^i_t(\bx) \left[1-\varepsilon_{t+1}^iK_{t+1}(\bx,\bX^i_{t+1})\right]+\varepsilon_{t+1}^iY^i_{t+1}K_{t+1}(\bx,\bX^i_{t+1})&\mbox{if }t\in T^i\\
r^i_{t+1}(\bx)&\!\!=\!\!& \sum_{j=1}^M a^{ij}_t r^j(\bx,\tau^{ij}_t)& \mbox{\hspace{-0.3cm}otherwise}.
\end{array}
\right .
}
\end{equation*}
The first statement follows easily from the boundedness of $Y$ and inequality (\ref{tKt}). The  second claim is then
an immediate consequence of the definition of $r^{\star}_{t_0}(\bx)$ as the limit of any of the $r^i_t(\bx)$'s if the processors stop computing after time $t_0$.
\end{proof}

The main idea of the proof is to establish an equivalent of Theorem \ref{bigtheo} with $r_t^{\star}$ in place of $r_t^i$. To this aim, we start by rewriting iteration (\ref{180114}) in the following form:
\begin{equation*}
\left \{ \begin{array}{llll}
r^{\star}_1(\bx) &\!\!\!\!=\!\!\!\!&\sum_{j=1}^M\phi^j_0Y^j_1  &\\
r^{\star}_{t+1}(\bx) &\!\!\!\!=\!\!\!\!&r^{\star}_t(\bx)-\sum_{j=1}^M \mathbf 1_{[t \in T^j]}\phi^j_t\varepsilon_{t+1}^jH\left(\bZ^j_{t+1},r^{\star}_t(\bx)\right)+\Delta_{t+1}(\bx)& \mbox{\hspace{-0.15cm}for $t \geq 1$},
\end{array}
\right .
\end{equation*}
where
$$\Delta_{t+1}(\bx)\stackrel{\mbox{\tiny def}}{=}\sum_{j=1}^M \mathbf 1_{[t \in T^j]}\phi^j_t\varepsilon_{t+1}^j\left [H\left(\bZ^j_{t+1},r^{\star}_t(\bx)\right)-H\left(\bZ^j_{t+1},r^{j}_t(\bx)\right)\right].$$
The crucial step is to observe that, for all $t\geq 1$,
\begin{equation}
\label{M+M}
r_t^{\star}(\bx)=m_t^{\star}(\bx) + M_t(\bx),
\end{equation}
where $m_t^{\star}(\bx)$ obeys the recursion
\begin{equation}
\label{QJSM}
\left \{ \begin{array}{llll}
m^{\star}_1(\bx) &=&\sum_{j=1}^M\phi^j_0Y^j_1  &\\
m^{\star}_{t+1}(\bx) &=&m^{\star}_t(\bx)-\sum_{j=1}^M \mathbf 1_{[t \in T^j]}\phi^j_t\varepsilon_{t+1}^jH\left(\bZ^j_{t+1},m^{\star}_t(\bx)\right)& \mbox{for } t \geq 1,
\end{array}
\right .
\end{equation}
and 
\begin{equation}
\label{MT}
M_t(\bx)=\sum_{\tau=2}^{t}\bigg [\Delta_{\tau}(\bx)\prod_{\ell=\tau+1}^t \Big(1-\sum_{j=1}^M\mathbf 1_{[\ell-1 \in T^j]}\phi^j_{\ell-1}\varepsilon^j_{\ell}K_{\ell}(\bx,\bX^j_{\ell})\Big)\bigg]
\end{equation}
(by convention, an empty sum is $0$ and a void product is $1$). In view of decomposition (\ref{M+M}), the rest of the proof is naturally divided into two steps: Firstly we establish $L^2$ consistency of the intermediary estimate $m^{\star}_t(\bx)$ towards $r(\bx)$ (Proposition \ref{HZ}), and secondly we show that the reminder term $M_t(\bx)$ tends to zero in $L^2$ (Proposition \ref{l12}).
\medskip

An easy induction reveals that the intermediary estimate $m_t^{\star}(\bx)$ is
$$m_t^{\star}(\bx)=\sum_{i=1}^M\sum_{\tau=1}^t W_{t,\tau}^i(\bx)Y_{\tau}^i,$$
where, for any processor $i \in \{1, \hdots, M\}$, any time instant $t\geq 1$, and all $1 \leq \tau \leq t$,
$$
W^i_{t,\tau}(\bx)=\mathbf 1_{[\tau-1 \in T^i]}\phi^i_{\tau-1}\varepsilon^i_{\tau}K_{\tau}(\bx,\bX^i_{\tau})\prod_{\ell=\tau+1}^t\Big(1-\sum_{j=1}^M\mathbf 1_{[\ell-1 \in T^j]}\phi^j_{\ell-1}\varepsilon^j_{\ell}K_{\ell}(\bx,\bX^j_{\ell})\Big)
$$
(by convention, $\mathbf 1_{[0 \in T^i]}=1$, $\varepsilon_1^i=1$, and $K_1(\cdot,\cdot) \equiv 1$). The weights $W^i_{t,\tau}(\bx)$ are nonnegative random variables which do not depend upon the values of the $Y_t$'s. Moreover, it is easy to check that they satisfy the normalizing condition
$$\sum_{i=1}^M\sum_{\tau=1}^tW^i_{t,\tau}(\bx)=1.$$
(Recall that $\sum_{j=1}^M\phi_0^j=1$ and $0 \leq \sum_{j=1}^M\phi_{\tau}^jÊ\leq 1$ for $\tau \geq 0$ ; see Remark \ref{normalization}.) Thus, the good news is that the estimate $m^{\star}(\bx)$ is but a special form of a locally weighted
average estimate \citep[see, e.g.,][Chapter 2]{GyKoKr02}. According to Stone's theorem
(\citealp[][]{St77}, and \citealp[Chapter 4 in][]{GyKoKr02}), $L^2$ consistency of $m^{\star}(\bx)$ holds if the following three conditions are satisfied:
\begin{enumerate}[$(i)$]
\item There is a constant $c$ such that, for every nonnegative Borel measurable function $f:\mathbb R^d \to \mathbb R$ satisfying $\mathbb E f(\bX)<\infty$,
$$\mathbb E \left [ \sum_{i=1}^M\sum_{\tau=1}^tW^i_{t,\tau}(\bX) f(\bX^i_{\tau})\right] \leq c\,\mathbb E f(\bX),\quad \mbox{for all }t \geq 1.$$
\item  For all $a>0$,
$$\mathbb E \bigg [\sum_{i=1}^M \sum_{\tau=1}^t W^i_{t,\tau}(\bX)\mathbf 1_{[\|\bX^i_{\tau}-\bX\|>a]}\bigg ]  \to 0\quad \mbox{as } t \to \infty.$$
\item One has
$$\mathbb E \bigg[\sum_{i=1}^M\sum_{\tau=1}^t\left (W^i_{t,\tau}(\bX)\right)^2\bigg] \to 0\quad \mbox{as } t \to \infty.$$
\end{enumerate}
\begin{pro}
\label{HZ}
Assume that there exist a sequence $(h_t)_{t \geq 1}$ of positive real numbers and a nonnegative, nonincreasing function $L$ on $[0,\infty)$ such that $h_t \to 0$ (as $t\to \infty$), $r^dL(r)\to 0$ (as $r \to \infty$) and, for all $\bx,\bz \in \mathbb R^d$ and all $t\geq 2$,
\begin{equation}
\label{plus+}
h_t^d K_t(\bx,\bz)\leq L\left (\frac{\|\bx-\bz\|}{h_t} \right).
\end{equation}
Assume, in addition, that
$$\sup_{t,\bx,\bz} \varepsilon_t^i K_t(\bx,\bz)\leq1 \quad \mbox{for all $i \in \{1, \hdots, M\}$},$$
and that
\begin{equation}
\label{25.4}
\underset{t\to \infty}{\lim\inf} \int_{\mathbb R^d} K_t(\bx,\bz)\mu(\emph{d}\bz)>0 \mbox{ at $\mu$-almost all $\bx \in \mathbb R^d$}.
\end{equation}
Then, provided $th_t^d \to \infty$, one has, for all $i \in \{1, \hdots, M\}$, 
$$\mathbb E\left[\int_{\mathbb R^d}\left|m_t^{\star}(\bx)-r(\bx)\right|^2\mu(\emph{d}\bx)\right]\to 0 \quad \mbox{as } t \to \infty.$$
\end{pro}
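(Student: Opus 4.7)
\subsection*{Proof proposal for Proposition \ref{HZ}}

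The strategy is to exploit the explicit representation $m_t^{\star}(\bx)=\sum_{i=1}^M\sum_{\tau=1}^tW^i_{t,\tau}(\bx)Y^i_{\tau}$ displayed just above the statement, and to verify that the random weights $W^i_{t,\tau}(\bx)$ satisfy the three hypotheses $(i)$--$(iii)$ listed before the proposition. Once this is done, Stone's theorem applies verbatim and yields the announced $L^2$ convergence. The weights are nonnegative, sum to one, and do not depend on the $Y^j_{\tau}$'s, so the only thing to prove is the Stone bound, the localization condition and the small-weight condition. A standing remark that will be used throughout is that the factor $\phi^j_{\ell-1}\varepsilon^j_\ell K_\ell(\bx,\bX^j_\ell)$ lies in $[0,1]$ by Assumption~\textbf{1}, Assumption~\textbf{5}, and the hypothesis $\sup_{t,\bx,\bz}\varepsilon^i_tK_t(\bx,\bz)\leq 1$, so the product $\prod_{\ell=\tau+1}^t(1-\sum_j\cdots)$ is itself in $[0,1]$.

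For condition $(i)$, the plan is to drop the product factor (which is at most $1$) and to reduce to
\[
\mathbb{E}\bigg[\sum_{i,\tau}W^i_{t,\tau}(\bX)f(\bX^i_\tau)\bigg]\leq \sum_{i=1}^M\sum_{\tau=1}^t\varepsilon^i_\tau\,\mathbb{E}\bigl[K_\tau(\bX,\bX^i_\tau)f(\bX^i_\tau)\bigr].
\]
Using independence between $\bX$ and $\bX^i_\tau$, the kernel bound $h_\tau^dK_\tau(\bx,\bz)\leq L(\|\bx-\bz\|/h_\tau)$, and the tail condition $r^dL(r)\to 0$, one recovers exactly the Stone-type inequality established by \citet{GyKoKr02} for classical R\'ev\'esz-type estimates, up to a factor involving $\sum_\tau \varepsilon^i_\tau$ which is controlled by Assumption~\textbf{5(b)} and the standing hypothesis that the running sum $\sum_\tau \varepsilon^i_\tau/h_\tau^d$ is comparable to $1/h_t^d$ (Abel summation combined with $(th_t^d)$ nondecreasing). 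For condition $(ii)$, the localization is a direct consequence of $h_t\to 0$ and of the fact that $L$ is nonincreasing: for any fixed $a>0$, $h_\tau^dK_\tau(\bx,\bz)\mathbf{1}_{\|\bx-\bz\|>a}\leq L(a/h_\tau)\to 0$, so the contribution of indices $\tau$ of order $t$ vanishes; the contribution of the few initial indices is negligible because of the shrinking product prefix, which is where hypothesis (\ref{25.4}) enters as a geometric-type damping once an expectation is taken.

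The main obstacle is condition $(iii)$. The natural bound
\[
\sum_{i,\tau}\bigl(W^i_{t,\tau}(\bx)\bigr)^2\leq \Bigl(\max_{i,\tau}W^i_{t,\tau}(\bx)\Bigr)\sum_{i,\tau}W^i_{t,\tau}(\bx)=\max_{i,\tau}W^i_{t,\tau}(\bx)
\]
reduces the problem to proving that the maximal weight tends to $0$ in expectation. Thanks to $h_\tau^dK_\tau\leq L(0)$ and Assumption~\textbf{5(b)}, one has, for indices with $\tau\geq t_0$,
\[
W^i_{t,\tau}(\bx)\leq \phi^i_{\tau-1}\varepsilon^i_\tau K_\tau(\bx,\bX^i_\tau)\leq \frac{C_2L(0)}{\tau\,h_\tau^d},
\]
which tends to $0$ uniformly in $i$ because $th_t^d\to\infty$ and $(th_t^d)$ is nondecreasing. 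For the initial indices $\tau<t_0$, the product $\prod_{\ell=\tau+1}^t(1-\cdots)$ must be shown to decay; this is where hypothesis (\ref{25.4}) is used: taking expectations, the $\ell$-th factor is on average bounded by $1-c\varepsilon_\ell\int K_\ell(\bx,\cdot)\,\mathrm{d}\mu$ on a set of full $\mu$-measure for large $\ell$, and $\sum_\ell\varepsilon_\ell\int K_\ell\,\mathrm{d}\mu$ diverges (since $\varepsilon_\ell\sim 1/\ell$), forcing the expected product to vanish. Integrating against $\mu$ and invoking dominated convergence (each weight is bounded by $1$) then yields $\mathbb{E}\bigl[\sum_{i,\tau}(W^i_{t,\tau}(\bX))^2\bigr]\to 0$, which closes the verification of Stone's three conditions and proves the proposition.
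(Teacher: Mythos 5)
Your overall strategy --- verify Stone's conditions $(i)$--$(iii)$ for the weights $W^i_{t,\tau}$ and invoke Stone's theorem --- is exactly the paper's, and your treatments of $(ii)$ and $(iii)$ are workable: your bound $\sum_{i,\tau}(W^i_{t,\tau})^2\leq\max_{i,\tau}W^i_{t,\tau}$ is a legitimate alternative to the paper's route via $\sum_{i,\tau}(W^i_{t,\tau})^2\leq C\sum_{i,\tau}(\tau h_\tau^d)^{-1}W^i_{t,\tau}$ and the Toeplitz lemma (although note that Proposition \ref{HZ} only assumes $th_t^d\to\infty$, so you should not lean on $(th_t^d)$ being nondecreasing there). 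The genuine gap is in condition $(i)$. Discarding the product factor $\prod_{\ell=\tau+1}^t(1-\cdots)$ and reducing to $\sum_{i=1}^M\sum_{\tau=1}^t\varepsilon^i_\tau\,\mathbb E\bigl[K_\tau(\bX,\bX^i_\tau)f(\bX^i_\tau)\bigr]$ cannot work: since $\varepsilon^i_\tau\asymp 1/\tau$, even in the most favorable case where $\mathbb E[K_\tau(\bX,\bz)]$ is bounded uniformly in $\bz$ this sum grows like $\log t\cdot\mathbb Ef(\bX)$, so no constant $c$ uniform in $t$ can come out of it; and the auxiliary claim that $\sum_\tau\varepsilon^i_\tau/h_\tau^d$ is comparable to $1/h_t^d$ is neither a hypothesis of the proposition nor relevant, since the quantity your reduction produces is $\sum_\tau\varepsilon^i_\tau$. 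The product factor is precisely what normalizes the weights ($\sum_{i,\tau}W^i_{t,\tau}=1$ comes from a telescoping of these products) and cannot be thrown away.

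The paper's proof of $(i)$ is of a different nature and contains the one idea you are missing: define $\bar m^{\star}_t$ by the recursion (\ref{QJSM}) with $f(\bX^j_{t+1})$ in place of $Y^j_{t+1}$, condition on the $\sigma$-algebra $\mathcal F_t$ generated by the first $t$ observations, and use the symmetry $K_{t+1}(\bz,\bx)=K_{t+1}(\bx,\bz)$ to swap the two integration variables in the resulting double integral. One then finds that $(\mathbb E\bar m^{\star}_t(\bX))_{t\geq1}$ obeys the same recursion as the constant sequence with value $\mathbb Ef(\bX)$, whence $\mathbb E\bar m^{\star}_t(\bX)=\mathbb Ef(\bX)$ exactly and $(i)$ holds with $c=1$. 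This exact identity replaces the covering-lemma bound you were trying to import from \citet{GyKoKr02}, and it is the step where the symmetry of $K_t(\cdot,\cdot)$ genuinely enters the proof.
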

\begin{proof}[Proof of Proposition \ref{HZ}]
The arguments are adapted from the proof of Theorem 25.1 in \citet{GyKoKr02}, which offers a similar result for the centralized version. We proceed by checking Stone's conditions $(i)$-$(iii)$ of consistency. 
\medskip

To show $(i)$, fix $f$ a nonnegative integrable function on $\mathbb R^d$, and define $\bar m_t^{\star}(\bx)$ by iteration (\ref{QJSM}), with $\bar m^{\star}_1(\bx)=\sum_{j=1}^M \phi_0^jf(\bX_1^j)$ in place of $\sum_{j=1}^M \phi_0^jY_1^j$, and $(\bX^j_{t+1},f(\bX_{t+1}^j))$ in place of $(\bX^j_{t+1},Y_{t+1}^j)$. We shall prove that
\begin{equation}
\label{S1}
\mathbb E \bar m_t^{\star}(\bX)= \mathbb E f(\bX),
\end{equation}
which implies $(i)$ with $c=1$. To establish (\ref{S1}), denote by $\mathcal F_t$ the $\sigma$-algebra generated by $(\bX_1,Y_1), \hdots, (\bX_t,Y_t)$. Then
\begin{align*}
\mathbb E\left [ \bar m_{t+1}^{\star}(\bX) | \mathcal F_t\right] &= \mathbb E\left [ \bar m_{t}^{\star}(\bX) | \mathcal F_t\right] \\
& + \sum_{j=1}^M \mathbf 1_{[t \in T^j]}\phi^j_t\varepsilon_{t+1}^j \mathbb E \big [ \left( f(\bX^j_{t+1})- \bar m_{t}^{\star}(\bX)\right)K_{t+1}(\bX,\bX^j_{t+1})|\mathcal F_t\big]\\
&= \mathbb E\left [ \bar m_{t}^{\star}(\bX) | \mathcal F_t\right]  \\
& + \varepsilon^{\star}_{t+1}\int_{\mathbb R^d}\int_{\mathbb R^d} \left( f(\bx)-\bar m_{t}^{\star}(\bz)\right) K_{t+1}(\bz,\bx)\mu(\mbox{d}\bx)\mu(\mbox{d}\bz),
\end{align*}
where, to lighten notation a bit, we set
$$\varepsilon^{\star}_{t+1}= \sum_{j=1}^M \mathbf 1_{[t \in T^j]}\phi^j_t\varepsilon_{t+1}^j.$$
Thus,
\begin{align*}
&\mathbb E\left [ \bar m_{t+1}^{\star}(\bX) | \mathcal F_t\right] \\
&\quad = \mathbb E\left [ \bar m_{t}^{\star}(\bX) | \mathcal F_t\right]  + \varepsilon^{\star}_{t+1} \int_{\mathbb R^d}\int_{\mathbb R^d} \left( f(\bx)-\bar m_{t}^{\star}(\bx)\right) K_{t+1}(\bx,\bz)\mu(\mbox{d}\bx)\mu(\mbox{d}\bz)\\
& \qquad (\mbox{by symmetry of $K_t(\cdot,\cdot)$})\\
&\quad = \mathbb E\left [ \bar m_{t}^{\star}(\bX) | \mathcal F_t\right] + \varepsilon^{\star}_{t+1} \mathbb E \big [ \left(f(\bX)- \bar m_{t}^{\star}(\bX)\right)K_{t+1}(\bX,\bX^1_{t+1})\big].
\end{align*}
Therefore, taking expectation on both sides of the equality, and noting that $\mathbb E\bar m^{\star}_1(\bX)=\mathbb Ef(\bX)$, we see that
\begin{equation*}
{\small
\left \{ \begin{array}{lll}
\mathbb E \bar m_{1}^{\star}(\bX)&\!\!\!=\!\!\!& \mathbb E f(\bX)\\
\mathbb E \bar m_{t+1}^{\star}(\bX)&\!\!\!=\!\!\!& \mathbb E \bar m_{t}^{\star}(\bX) + \varepsilon^{\star}_{t+1} \mathbb E \big [ \left(f(\bX_{t+1})- \bar m_{t}^{\star}(\bX)\right)K_{t+1}(\bX,\bX^1_{t+1})\big] \quad \mbox{\hspace{-0.3cm}for $t\geq 1$}.
\end{array}
\right .
}
\end{equation*}
Next, let the sequence $(f^{\star}_t(\bx))_{t\geq 1}$ be defined by the iteration
\begin{equation*}
\left \{ \begin{array}{lll}
f^{\star}_1(\bx)&=& f(\bx)\\
f^{\star}_{t+1}(\bx)&=& f^{\star}_{t}(\bx)+\varepsilon^{\star}_{t+1} \left( f(\bx)-f^{\star}_t(\bx)\right)K_{t+1}(\bx,\bX^1_{t+1}) \quad \mbox{for $t\geq 1$}.
\end{array}
\right .
\end{equation*}
Clearly, $f_t^{\star}(\bx)=f(\bx)$ for all $t\geq 1$, and
$$\mathbb E f^{\star}_{t+1}(\bX)=\mathbb E f^{\star}_{t}(\bX)+\varepsilon^{\star}_{t+1}\mathbb E\big[ \left( f(\bX)-f^{\star}_t(\bX)\right)K_{t+1}(\bX,\bX^1_{t+1})\big].$$
Consequently, the sequences $(\mathbb E \bar m_{t}^{\star}(\bX))_{t \geq 1}$ and $(\mathbb E f^{\star}_{t}(\bX))_{t \geq 1}$ satisfy the same iteration, and thus
$$\mathbb E \bar m_{t}^{\star}(\bX)=\mathbb E f^{\star}_{t}(\bX)=\mathbb E f(\bX).$$
This proves (\ref{S1}).
\medskip

Secondly, for $(iii)$, we set
$$p_t(\bx)=\int_{\mathbb R^d}K_t(\bx,\bz)\mu(\mbox{d}\bz).$$
For each $i \in \{1, \hdots, M\}$, each $1\leq \tau \leq t$, and all $\bx \in \mathbb R^d$, we have, using the properties of $K_t(\cdot,\cdot)$ and Assumption {\bf 5(b)},  
$$W^i_{t,\tau}(\bx) \leq \frac{C_2L(0)}{\tau h_{\tau}^d}.$$
Also, recalling that $\eta$ is the positive constant of Lemma \ref{lemmetsitsi}, 
\begin{align*}
\mathbb E W^i_{t,\tau}(\bx) &\leq  \frac{C_2L(0)}{\tau h_{\tau}^d}\exp \left[-{C_1\eta }\sum_{\ell=\tau+1}^t\frac{1}{\ell}\Big (\mathbb E K_{\ell}(\bx,\bX)\sum_{j=1}^M\mathbf 1_{[\ell-1 \in T^j]}\Big)\right]\\
& \leq   \frac{C_2L(0)}{\tau h_{\tau}^d}\exp \left[-{C_1\eta }\sum_{\ell=\tau+1}^t\frac{p_{\ell}(\bx)}{\ell}\right].
\end{align*}
In the second inequality, we used Assumption {\bf 5(a)}. Hence, evoking condition (\ref{25.4}), we deduce that, for each $i \in \{1, \hdots, M\}$ and $\tau \geq 1$, and for $\mu$-almost all $\bx \in \mathbb R^d$,
\begin{equation}
\label{25.8}
\mathbb E W^i_{t,\tau}(\bx) \to 0\quad \mbox{ as } t\to \infty.
\end{equation}
Since $\mathbb EW^i_{t,\tau}(\bx)\leq 1$, this implies, by the Lebesgue dominated convergence theorem, that
$\mathbb E W^i_{t,\tau}(\bX) \to 0$ as well.
Thus, putting all the pieces together, we conclude that
$$\mathbb E \bigg[\sum_{i=1}^M\sum_{\tau=1}^t \left(W_{t,\tau}^i(\bX)\right)^2 \bigg]\leq C_2L(0)\sum_{i=1}^M\sum_{\tau=1}^t \frac{\mathbb E W_{t,\tau}^i(\bX)}{\tau h_{\tau}^d} \to 0$$
by the condition $th_t^d\to \infty$ and the Toeplitz lemma \citep[see, e.g., Problem A.5 in][]{GyKoKr02}.
\medskip

To prove Stone's condition $(ii)$, it is enough to establish that for all $i \in \{1, \hdots, M\}$, all $a>0$, and $\mu$-almost all $\bx \in \mathbb R^d$, 
$$\mathbb E \bigg [\sum_{\tau=1}^t W^i_{t,\tau}(\bx)\mathbf 1_{[\|\bX^i_{\tau}-\bx\|>a]}\bigg ]  \to 0\quad \mbox{as } t \to \infty.$$
To this aim, first observe that by condition (\ref{25.4}), for $\mu$-almost all $\bx$,
there exists $p(\bx)>0$ and a large enough time $t_0(\bx)$ such that, for all $t\geq t_0(\bx)$, $p_t(\bx) \geq p(\bx)$. Thus, using (\ref{25.8}), we see that it is enough to show that, for these $\bx$, 
$$\mathbb E \left [ \sum_{\tau=t_0(\bx)}^t W^i_{t,\tau}(\bx)\mathbf 1_{[\|\bX^i_{\tau}-\bx\|>a]}\right] \to 0.$$
But
\begin{align*}
&\mathbb E \left [\sum_{\tau=t_0(\bx)}^t W^i_{t,\tau}(\bx)\mathbf 1_{[\|\bX^i_{\tau}-\bx\|>a]}\right] \\
& \quad =  \sum_{\tau=t_0(\bx)}^t \mathbb E W^i_{t,\tau}(\bx)\times\frac{\mathbb E \left [K_{\tau}(\bx,\bX^i_{\tau})\mathbf 1_{[\|\bX^i_{\tau}-\bx\|>a]}\right]}{\mathbb E  K_{\tau}(\bx,\bX^i_{\tau})}\\
&  \quad \leq \sum_{\tau=t_0(\bx)}^t \mathbb E W^i_{t,\tau}(\bx)\times\frac{h_{\tau}^{-d}L(a/h_{\tau})}{p_{\tau}(\bx)}\\
& \quad \leq \sum_{\tau=t_0(\bx)}^t \mathbb E W^i_{t,\tau}(\bx) \times\frac{h_{\tau}^{-d}L(a/h_{\tau})}{p(\bx)}\\
& \qquad \to 0,
\end{align*}
by the fact that $h_{t}^{-d}L(a/h_{t}) \to 0$ and the Toeplitz lemma. This completes the proof of the proposition.
\end{proof}

The next step in the proof of Theorem \ref{bigtheo} is to control the term $M_t(\bx)$ of identity (\ref{M+M}). To reach this goal, we first need a lemma. (In the sequel, the letter $C$ denotes a generic constant whose value may change from line to line.)
\begin{lem}
\label{l11}
Assume that Assumptions {\bf 1-5} are satisfied. Assume, in addition, that $|Y|\leq \gamma$, that (\ref{plus+}) is satisfied, and that
\begin{equation*}
\sup_{t,\bx,\bz} \varepsilon^i_t K_t(\bx,\bz)\leq1,\quad \mbox{for all $i \in \{1, \hdots, M\}$}.
\end{equation*}
Let $\rho\in(0,1)$ be the constant of Lemma \ref{lemmetsitsi}. Then, for all $i\in \{1, \hdots, M\}$ and all $t\geq 1$,
$$\sup_{\bx \in \mathbb R^d}\left|r^i_t(\bx)-r^{\star}_t(\bx)\right| \leq C \xi_t,$$
where $C\geq 0$ is a universal constant independent of $i$, and
$$\xi_t=\sum_{\tau=0}^{t-1} \frac{\rho^{t-\tau}}{(\tau+1)h_{\tau+1}^d}.$$
\end{lem}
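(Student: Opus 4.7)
The starting point is the linear decomposition (\ref{301213}) applied to the distributed system (\ref{distributed}), together with its limit (\ref{301213-2}) for the agreement sequence $(r^{\star}_t(\bx))_{t\geq 1}$. Subtracting the two representations, I would write
$$r^i_t(\bx)-r^{\star}_t(\bx)=\sum_{j=1}^M\bigl[\phi^{ij}(t,0)-\phi^j_0\bigr]Y^j_1+\sum_{\tau=1}^{t-1}\sum_{j=1}^M\bigl[\phi^{ij}(t,\tau)-\phi^j_\tau\bigr]s^j_\tau(\bx),$$
so the whole task reduces to bounding, uniformly in $\bx$, the coefficient gaps $\phi^{ij}(t,\tau)-\phi^j_\tau$ and the step magnitudes $|s^j_\tau(\bx)|$.

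For the coefficient gaps, Lemma \ref{lemmetsitsi}(2)(c) immediately supplies the geometric bound $|\phi^{ij}(t,\tau)-\phi^j_\tau|\leq A\rho^{t-\tau}$ with $\rho\in(0,1)$. For the steps, recall that $s^j_\tau(\bx)=-\varepsilon^j_{\tau+1}\mathbf 1_{[\tau\in T^j]}\bigl[r^j_\tau(\bx)K_{\tau+1}(\bx,\bX^j_{\tau+1})-Y^j_{\tau+1}K_{\tau+1}(\bx,\bX^j_{\tau+1})\bigr]$. By Lemma \ref{tKtlemma}, both $|r^j_\tau(\bx)|$ and $|Y^j_{\tau+1}|$ are bounded by $\gamma$, so $|s^j_\tau(\bx)|\leq 2\gamma\,\varepsilon^j_{\tau+1}K_{\tau+1}(\bx,\bX^j_{\tau+1})$. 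Using Assumption \textbf{5(b)} to get $\varepsilon^j_{\tau+1}\leq C_2/(\tau+1)$, together with the hypothesis $h_{\tau+1}^dK_{\tau+1}(\bx,\bz)\leq L(\|\bx-\bz\|/h_{\tau+1})\leq L(0)$ (since $L$ is nonincreasing), I obtain the pointwise estimate
$$\sup_{\bx\in\mathbb R^d}|s^j_\tau(\bx)|\leq \frac{2\gamma C_2 L(0)}{(\tau+1)h_{\tau+1}^d}.$$

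Combining these two bounds and summing over $j\in\{1,\ldots,M\}$ yields
$$\sup_{\bx\in\mathbb R^d}\bigl|r^i_t(\bx)-r^{\star}_t(\bx)\bigr|\leq MA\gamma\rho^t+2\gamma MAC_2L(0)\sum_{\tau=1}^{t-1}\frac{\rho^{t-\tau}}{(\tau+1)h_{\tau+1}^d}.$$
The second sum is exactly the $\tau\geq1$ part of $\xi_t$. The leftover boundary term $MA\gamma\rho^t$ is absorbed into the $\tau=0$ contribution $\rho^t/h_1^d$ of $\xi_t$, since by the convention $h_1^d\leq L(0)$ we have $\rho^t\leq L(0)\,\rho^t/h_1^d$. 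All constants collect into a single $C$ independent of $i$, delivering the claimed inequality.

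The only mildly delicate step is the very first one, namely justifying that the distributed scheme (\ref{distributed}) genuinely fits the linear template (\ref{tsitsi}) with the stated $s^j_\tau$; this is exactly the observation opening Section 5.2, so no surprise is expected. The rest of the proof is an uncomplicated but careful bookkeeping exercise where the main point is to keep track of the correct time index $\tau+1$ on $\varepsilon$ and $K$, and to notice that the geometric factor $\rho^{t-\tau}$ is precisely what makes $\xi_t$ summable-like in spite of the $1/(\tau h_\tau^d)$ blow-up at large $\tau$.
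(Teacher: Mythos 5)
Your proposal is correct and follows essentially the same route as the paper: the decomposition of $r^i_t(\bx)-r^{\star}_t(\bx)$ via (\ref{301213}) and (\ref{301213-2}), the geometric bound $|\phi^{ij}(t,\tau)-\phi^j_\tau|\leq A\rho^{t-\tau}$ from Lemma \ref{lemmetsitsi}, and the step bound $\sup_{\bx}|s^j_\tau(\bx)|\leq 2\gamma C_2 L(0)/((\tau+1)h_{\tau+1}^d)$ obtained from Lemma \ref{tKtlemma}, Assumption \textbf{5(b)} and (\ref{plus+}). Your explicit absorption of the boundary term $MA\gamma\rho^t$ into the $\tau=0$ contribution of $\xi_t$ via the convention $h_1^d\leq L(0)$ is a detail the paper leaves implicit, but the argument is the same.
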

\begin{proof}[Proof of Lemma \ref{l11}]
Observe to start with that, for all $i \in \{1, \hdots, M\}$ and all $t\geq 1$,
\begin{equation}
\label{+++}
\sup_{\bx \in \mathbb R^d}\left |H\left(\bZ^i_{t+1},r^i_{t}(\bx)\right)\right| \leq 2\gamma L(0)h_{t+1}^{-d}.
\end{equation}
Here we used the fact that $|Y|$, the $|r_t^i(\bx)|$'s and $|r_t^{\star}(\bx)|$ are bounded by $\gamma$---see Lemma \ref{tKtlemma}.
Now, according to identity (\ref{301213}) and (\ref{301213-2}), we may write
\begin{align*}
\left|r^i_t(\bx)-r^{\star}_t(\bx)\right| & = \left | \sum_{j=1}^M \left ( \phi^{ij}(t,0)-\phi^j_0\right)Y^j_1 + \sum_{\tau=1}^{t-1}\sum_{j=1}^M \left ( \phi^{ij}(t,\tau)-\phi^j_{\tau}\right)s^j_{\tau}\right|\\
& \leq \sum_{j=1}^M\left | \phi^{ij}(t,0)-\phi^j_0\right| \left |Y^j_1\right| + \sum_{\tau=1}^{t-1}\sum_{j=1}^M\left | \phi^{ij}(t,\tau)-\phi^j_{\tau}\right| \left | s^j_{\tau}\right|\\
& \leq (M.\gamma.A)  \rho^{t} + A \sum_{\tau=1}^{t-1}\sum_{j=1}^M \rho^{t-\tau} | s^j_{\tau}|,
\end{align*} 
where $A$ and $\rho$ are the constants of Lemma \ref{lemmetsitsi}. Thus,
\begin{align*}
\left|r^i_t(\bx)-r^{\star}_t(\bx)\right| & \leq (M.\gamma.A)  \rho^{t} +A \sum_{\tau=1}^{t-1}\sum_{j=1}^M \rho^{t-\tau} \mathbf 1_{[\tau \in T^j]}\varepsilon^j_{\tau+1} \left | H\left(\bZ^j_{\tau+1},r^j_{\tau}(\bx)\right)\right|\\
& \leq (M.\gamma.A)  \rho^{t} + (2\gamma L(0).M.A.C_2) \sum_{\tau=1}^{t-1} \frac{\rho^{t-\tau}}{(\tau+1)h_{\tau+1}^d}\\
& \quad (\mbox{by inequality (\ref{+++})}).
\end{align*}
As desired, we conclude that, for some constant $C\geq 0$ independent of $i$,
$$\sup_{\bx \in \mathbb R^d}\left|r^i_t(\bx)-r^{\star}_t(\bx)\right| \leq C \sum_{\tau=0}^{t-1} \frac{\rho^{t-\tau}}{(\tau+1)h_{\tau+1}^d}.$$
\end{proof}

In accordance with our proof plan, the next proposition ensures that the (random) series $(M_t(\bx))_{t\geq 2}$ defined in (\ref{MT}) vanishes in a $L^2$ sense as $t\to \infty$.
\begin{pro}
\label{l12}
Assume that the assumptions of Theorem \ref{bigtheo} are satisfied. Then, provided $(th^d_t)_{t \geq 1}$ is nondecreasing and $\sum_{t \geq 1} \frac{1}{t^2h_t^{2d}}<\infty$, one has
$$\mathbb E \int_{\mathbb R^d} M^2_t(\bx)\mu(\emph{d}\bx) \to 0 \quad\mbox{as } t \to \infty.$$
\end{pro}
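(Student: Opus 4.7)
My plan combines the pointwise bound on $\Delta_\tau$ from Lemma \ref{l11} with a telescoping identity for the product weights $P_{\tau,t}(\bx):=\prod_{\ell=\tau+1}^t(1-\alpha_\ell(\bx))$, where
$$\alpha_\ell(\bx):=\sum_{j=1}^M\mathbf{1}_{[\ell-1\in T^j]}\phi^j_{\ell-1}\varepsilon^j_\ell K_\ell(\bx,\bX^j_\ell)\in[0,1].$$
Writing out $\Delta_\tau$ and applying Lemma \ref{l11} gives $|\Delta_\tau(\bx)|\leq C\,\xi_{\tau-1}\,\alpha_\tau(\bx)$. A preliminary fact to establish is that $\xi_t\to 0$: since $(th_t^d)_{t\geq 1}$ is nondecreasing, the sequence $u_\tau:=1/((\tau+1)h^d_{\tau+1})$ is nonincreasing and tends to $0$ (the summability hypothesis in particular forces $th_t^d\to\infty$), and splitting the sum defining $\xi_t$ at $\lfloor t/2\rfloor$ (using geometric decay of $\rho^{t-\tau}$ on the early part and smallness of $u_\tau$ on the late part) does the job. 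A second preliminary is the telescoping identity $\alpha_\tau(\bx)P_{\tau,t}(\bx)=P_{\tau,t}(\bx)-P_{\tau-1,t}(\bx)$, which implies $\sum_{\tau>\tau_0}\alpha_\tau P_{\tau,t}\leq 1$ for any $\tau_0\geq 1$.

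Given these, fix $\epsilon>0$ and choose $\tau_0$ large enough that $C\xi_\tau\leq\epsilon$ for all $\tau\geq\tau_0$. Split $M_t(\bx)=S_1(\bx)+S_2(\bx)$ with $S_1(\bx)=\sum_{\tau=2}^{\tau_0}\Delta_\tau(\bx)P_{\tau,t}(\bx)$ and $S_2(\bx)=\sum_{\tau=\tau_0+1}^{t}\Delta_\tau(\bx)P_{\tau,t}(\bx)$. The tail obeys $|S_2(\bx)|\leq\epsilon\sum_{\tau>\tau_0}\alpha_\tau(\bx)P_{\tau,t}(\bx)\leq\epsilon$ deterministically, so $\int\mathbb{E}S_2^2(\bx)\,\mu(d\bx)\leq\epsilon^2$.

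For the head $S_1$, which has the fixed number $\tau_0-1$ of summands, Cauchy--Schwarz reduces matters to showing, for each fixed $\tau$, that $\int\mathbb{E}[\Delta_\tau^2(\bx)P_{\tau,t}^2(\bx)]\mu(d\bx)\to 0$ as $t\to\infty$. The decisive point is that $\Delta_\tau$ depends on the i.i.d.\ stream only through times $\leq\tau$, while $P_{\tau,t}$ depends on it only through times $\tau+1,\ldots,t$, so $\Delta_\tau$ and $P_{\tau,t}$ are independent. Using $|\Delta_\tau|\leq 2\gamma$ (Lemma \ref{tKtlemma}), the independence of the $\alpha_\ell$'s across $\ell$, and the elementary inequality $(1-x)^2\leq 1-x$ on $[0,1]$, one obtains
$$\mathbb{E}[\Delta_\tau^2(\bx)P_{\tau,t}^2(\bx)]\leq 4\gamma^2\,\mathbb{E}[P_{\tau,t}(\bx)]=4\gamma^2\prod_{\ell=\tau+1}^t\bigl(1-\varepsilon^\star_\ell p_\ell(\bx)\bigr),$$
with $\varepsilon^\star_\ell:=\sum_j\mathbf{1}_{[\ell-1\in T^j]}\phi^j_{\ell-1}\varepsilon^j_\ell\geq\eta C_1/\ell$ by Assumption 5 and Lemma \ref{lemmetsitsi}.2(b).

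Finally, condition (\ref{25.4}) yields $p_\ell(\bx)\geq p(\bx)>0$ eventually for $\mu$-a.e.\ $\bx$, so $\sum_\ell\varepsilon^\star_\ell p_\ell(\bx)=\infty$ and $\mathbb{E}[P_{\tau,t}(\bx)]\to 0$ at $\mu$-a.e.\ $\bx$. Dominated convergence (the integrand is bounded by $1$) gives $\int\mathbb{E}[P_{\tau,t}(\bx)]\mu(d\bx)\to 0$, hence $\int\mathbb{E}S_1^2\mu\to 0$. Combining, $\limsup_{t\to\infty}\int\mathbb{E}M_t^2(\bx)\mu(d\bx)\leq 2\epsilon^2$, and arbitrariness of $\epsilon$ concludes. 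The main subtlety is the mixed past--future structure of $M_t$; the telescoping identity and the independence-across-time factorization together untangle it.
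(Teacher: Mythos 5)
Your proof is correct, and it takes a genuinely different route from the one in the paper. The paper discards the kernel factor at once, bounding $|\Delta_\tau(\bx)|\leq C\xi_{\tau-1}/(\tau h_\tau^d)$, so that $|M_t(\bx)|$ is dominated by the deterministic series $\sum_\tau \xi_{\tau-1}/(\tau h_\tau^d)$; the hypothesis $\sum_t t^{-2}h_t^{-2d}<\infty$ is then invoked (through Lemma \ref{lemmetechnique}) to show that this series converges, which by dominated convergence reduces the $L^2$ claim to the pointwise statement $\mathbb E|M_t(\bx)|\to 0$, itself obtained from the exponential bound $\mathbb E P_{\tau,t}(\bx)\leq \exp(-C_1\eta\, p(\bx)\sum_{\ell=\tau+1}^t \ell^{-1})$ combined with the weighted estimate $t^{-\alpha}\sum_\tau \tau^\alpha \xi_{\tau-1}/(\tau h_\tau^d)\to 0$. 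You instead keep the kernel inside $\Delta_\tau$, which gives the sharper bound $|\Delta_\tau(\bx)|\leq C\xi_{\tau-1}\alpha_\tau(\bx)$, and you exploit the Abel summation $\sum_{\tau=\tau_0+1}^t \alpha_\tau P_{\tau,t}=1-P_{\tau_0,t}\leq 1$ (valid since $\alpha_\ell\in[0,1]$, a consequence of $\varepsilon^j_\ell K_\ell\leq 1$ and $\sum_j\phi^j_{\ell-1}\leq 1$) to make the tail smaller than $\epsilon$ deterministically; the finitely many head terms then vanish because $\mathbb E P_{\tau,t}(\bx)=\prod_{\ell=\tau+1}^t(1-\varepsilon^\star_\ell p_\ell(\bx))\to 0$ at $\mu$-almost every $\bx$ by condition (\ref{25.4}), using the same independence-across-time factorization that the paper uses implicitly, together with the correct observation that $\Delta_\tau$ is measurable with respect to the data up to time $\tau$ while $P_{\tau,t}$ depends only on times $\tau+1,\dots,t$, and the passage from $P_{\tau,t}^2$ to $P_{\tau,t}$ via $(1-x)^2\leq 1-x$. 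What your route buys: it works directly with second moments, it bypasses the second and third statements of Lemma \ref{lemmetechnique} entirely, and it in fact establishes the proposition using only $\xi_t\to 0$ (hence only $th_t^d\to\infty$, which the square-summability implies) besides Lemma \ref{l11}; what the paper's route buys is a more explicit quantitative handle on the decay of $\mathbb E M_t(\bx)$ through the series $t^{-\alpha}\sum_\tau \tau^{\alpha}\xi_{\tau-1}/(\tau h_\tau^{d})$. All of your individual steps check out.
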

\begin{proof}[Proof of Proposition \ref{l12}]
Clearly, for all $i \in \{1, \hdots, M\}$ and all $t\geq 1$,
$$
\left |H\left(\bZ^i_{t+1},r^{i}_{t}(\bx)\right)-H\left(\bZ^i_{t+1},r^{\star}_{t}(\bx)\right)\right| \leq  L(0)h_{t+1}^{-d} \sup_{\bx \in \mathbb R^d} \left |r_t^i(\bx)-r^{\star}_t(\bx)\right|.
$$
On the other hand, for all $\bx \in \mathbb R^d$ and all $t\geq 2$, 
\begin{align*}
M_t(\bx) &\leq \sum_{\tau=2}^t\Delta_{\tau}(\bx)\\
& \leq (L(0).C_2)\sum_{\tau =2} ^t(\tau h^d_{\tau})^{-1}\sum_{j=1}^M\sup_{\bx \in \mathbb R^d} \left |r_{\tau-1}^j(\bx)-r^{\star}_{\tau-1}(\bx)\right|.
\end{align*}
Thus, according to Lemma \ref{l11}, we deduce that for some constant $C\geq 0$,
$$
M_t(\bx)  \leq C\sum_{\tau =2} ^t\frac{\xi_{\tau-1}}{\tau h^d_{\tau}},$$
where   
$$\xi_t=\sum_{\tau=0}^{t-1} \frac{\rho^{t-\tau}}{(\tau+1)h_{\tau+1}^d}.$$
By applying technical Lemma \ref{lemmetechnique} at the end of the section, we conclude that there exists a nonnegative universal constant $C'$ such that $\sup_{t,\bx}M_t(\bx)\leq C'$. Therefore, invoking the Lebesgue dominated convergence theorem, we see that it is enough to prove that, for $\mu$-almost all $\bx \in \mathbb R^d$, $\mathbb E M_t(\bx)\to 0$ as $t\to \infty$.
\medskip

To this aim, let
$$p_t(\bx)=\int_{\mathbb R^d}K_t(\bx,\bz)\mu(\mbox{d}\bz),$$
and recall---by condition (\ref{25.4})---that for $\mu$-almost all $\bx$,
there exists $p(\bx)>0$ and a  large enough time $t_0(\bx)$ such that, for all $t\geq t_0(\bx)$, $p_t(\bx) \geq p(\bx)$. Using similar arguments as in the first part of the proof, we may write
$$M_t(\bx)\leq C\sum_{\tau =2} ^t\bigg[\frac{\xi_{\tau-1}}{\tau h^d_{\tau}}\prod_{\ell=\tau+1}^t \Big(1-\sum_{j=1}^M\mathbf 1_{[\ell-1 \in T^j]}\phi^j_{\ell-1}\varepsilon^j_{\ell}K_{\ell}(\bx,\bX^j_{\ell})\Big)\bigg].$$
Consequently, for $\mu$-almost all $\bx$ and all $t$ large enough (where the ``large enough'' depends upon $\bx$),
\begin{align*}
\mathbb E M_t(\bx)&\leq C\sum_{\tau=2}^{t_0(\bx)-2}\frac{\xi_{\tau-1}}{\tau h^d_{\tau}}\exp \left [-C_1 \eta \sum_{\ell=t_0(\bx)}^t\frac{p(\bx)}{\ell}\right]\\
& + \quad C\sum_{\tau=t_0(\bx)-1}^t\frac{\xi_{\tau-1}}{\tau h^d_{\tau}}\exp \left [-C_1 \eta \sum_{\ell=\tau+1}^t\frac{p(\bx)}{\ell}\right].
\end{align*}
The first term can be made arbitrarily small as $t\to \infty$. To control the second term, recall that
$$\sum_{\ell=1}^t \frac{1}{\ell}=\ln t+ \gamma + \mbox{o}(1) \quad \mbox{as } \ell \to \infty$$
(where $\gamma$ is the Euler-Mascheroni constant). Hence, for some positive constants $\alpha$ and $C$ (both depending upon $\bx$),
$$\sum_{\tau=t_0(\bx)-1}^t\frac{\xi_{\tau-1}}{\tau h^d_{\tau}}\exp \left [-C_1 \eta \sum_{\ell=\tau+1}^t\frac{p(\bx)}{\ell}\right] \leq \frac{C}{t^{\alpha}}\sum_{\tau=2}^t\frac{\tau^{\alpha}\xi_{\tau-1}}{\tau h^d_{\tau}}.$$
According to Lemma \ref{lemmetechnique}, the upper bound tends to zero as $t \to \infty$. 
\end{proof}

We are now ready to finalize the proof of Theorem \ref{bigtheo}. For each $i \in \{1, \hdots, M\}$, we write
\begin{align*}
&\mathbb E\left[\int_{\mathbb R^d}\left|r_t^i(\bx)-r(\bx)\right|^2\mu(\mbox{d}\bx)\right] \\
& \quad \leq 2\mathbb E\left [\sup_{\bx \in \mathbb R^d} \left|r_t^i(\bx)-r^{\star}(\bx)\right|^2\right]+ 2\mathbb E\left[\int_{\mathbb R^d}\left|r_t^{\star}(\bx)-r(\bx)\right|^2\mu(\mbox{d}\bx)\right] \\
& \quad \leq C \xi^2_t + 2\mathbb E\left[\int_{\mathbb R^d}\left|r_t^{\star}(\bx)-r(\bx)\right|^2\mu(\mbox{d}\bx)\right] \\
& \qquad (\mbox{by Lemma \ref{l11}}).
\end{align*}
The first term on the right-hand side tends to zero by technical Lemma \ref{lemmetechnique}. To prove that the second one vanishes as well, just note that
\begin{align*}
&\mathbb E\left[\int_{\mathbb R^d}\left|r_t^{\star}(\bx)-r(\bx)\right|^2\mu(\mbox{d}\bx)\right] \\
& \quad \leq 
2\mathbb E\left[\int_{\mathbb R^d}\left|m_t^{\star}(\bx)-r(\bx)\right|^2\mu(\mbox{d}\bx)\right] + 2\int_{\mathbb R^d} \mathbb E M^2_t(\bx)\mu(\mbox{d}\bx)
\end{align*}
and apply Proposition \ref{HZ} and Proposition \ref{l12}.
\begin{lem} [A technical lemma]
\label{lemmetechnique}
Let $\rho \in (0,1)$ and let
$$\xi_t=\sum_{\tau=0}^{t-1} \frac{\rho^{t-\tau}}{(\tau+1)h_{\tau+1}^d}, \quad \mbox{for all } t \geq 1.$$
If  $th_t^d\to \infty$, then $\xi_t \to 0$ as $t\to \infty$. If, in addition, $(th_t^d)_{t \geq 1}$ is nondecreasing and $\sum_{t \geq 1} \frac{1}{t^2h_t^{2d}}<\infty$, then
$$\sum_{\tau\geq 2} \frac{\xi_{\tau-1}}{\tau h^d_\tau}<\infty \quad \mbox{and} \quad \frac{1}{t^\alpha}\sum_{\tau=2}^t \frac{\tau^{\alpha}\xi_{\tau-1}}{\tau h_\tau^d}\to 0 \quad \mbox{as } t \to \infty,$$
for all $\alpha>0$.
\end{lem}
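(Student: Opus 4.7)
The plan is to rewrite $\xi_t$ in the cleaner form $\xi_t = \sum_{k=1}^{t} \rho^{t-k+1} a_k$ with $a_k = 1/(kh_k^d)$. Note that under the stronger hypotheses of the second part, $(th_t^d)$ is nondecreasing so $a_k$ is nonincreasing, and the convergence $\sum a_k^2 < \infty$ forces $a_k \to 0$, i.e.\ $th_t^d\to\infty$. So the first part of the lemma is a legitimate prerequisite of the second part.

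For the first assertion ($\xi_t\to 0$), I would use the classical ``convolution with a geometric kernel'' argument: given $\varepsilon>0$, pick $K$ so that $a_k<\varepsilon(1-\rho)/(2\rho)$ for $k\geq K$, split the sum at $K$, and bound the head by $\rho^{t-K+2}\cdot(\sum_{k<K}a_k)$ (exponentially small in $t$) and the tail by a geometric series $\leq \varepsilon/2$.

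For the summability $\sum_\tau a_\tau\xi_{\tau-1}<\infty$, I would write
\[
\sum_{\tau\geq 2} a_\tau\xi_{\tau-1} \;=\; \sum_{\tau\geq 2}\sum_{k<\tau} a_\tau a_k\,\rho^{\tau-k},
\]
apply the AM--GM inequality $a_\tau a_k\leq\tfrac12(a_\tau^2+a_k^2)$, and swap the order of summation in the two resulting pieces. Each double sum reduces by a geometric series in the exponent $\tau-k$ to a constant multiple of $\sum_k a_k^2=\sum_k 1/(k^2 h_k^{2d})$, which is finite by hypothesis. The same Cauchy--Schwarz / Fubini trick applied to $\xi_t$ itself yields the auxiliary bound
\[
\xi_t^2 \;\leq\; \frac{\rho}{1-\rho}\sum_{k=1}^{t}\rho^{t-k+1} a_k^2,
\qquad
\sum_{t\geq 1}\xi_t^2\;\leq\;\Bigl(\frac{\rho}{1-\rho}\Bigr)^{2}\!\sum_{k\geq 1} a_k^2\;<\;\infty,
\]
which is the key estimate for the last claim.

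For the weighted Cesàro statement, I would rewrite
\[
\frac{1}{t^\alpha}\sum_{\tau=2}^{t} \frac{\tau^\alpha\xi_{\tau-1}}{\tau h_\tau^d}
\;=\;\sum_{\tau=2}^t\Bigl(\frac{\tau}{t}\Bigr)^{\!\alpha} a_\tau\,\xi_{\tau-1}
\]
and apply Cauchy--Schwarz to split this as
$\bigl(\sum_\tau(\tau/t)^{2\alpha}\xi_{\tau-1}^2\bigr)^{1/2}\bigl(\sum_\tau a_\tau^2\bigr)^{1/2}$. The second factor is bounded by hypothesis. For the first factor, I use $\sum_\tau\xi_\tau^2<\infty$ together with a standard splitting trick: fix $N$ so that the tail $\sum_{\tau\geq N}\xi_{\tau-1}^2$ is less than $\varepsilon$, upper-bound $(\tau/t)^{2\alpha}$ by $(N/t)^{2\alpha}$ on $\tau\leq N$ and by $1$ on $\tau>N$; the head then vanishes as $t\to\infty$ (fixed $N$, $t^{-2\alpha}\to 0$) and the tail is already $<\varepsilon$. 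The main obstacle I anticipate is precisely this last step: the weights $\tau^\alpha$ grow rapidly, so a naïve Toeplitz argument on $\xi_{\tau-1}\to 0$ fails because of the diverging factor $1/h_\tau^d$, and it is the $L^2$-summability of $(\xi_t)$ that ultimately controls the weighted average.
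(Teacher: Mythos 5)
Your proof is correct, and it takes a genuinely different route from the paper's for the two main claims. The paper attacks $\sum_{\tau} a_\tau\xi_{\tau-1}$ (with $a_k=1/(kh_k^d)$) by expanding the double sum, swapping the order of summation, and then using the monotonicity of $(th_t^d)$ to pull the inner factor $a_\ell$ out of the geometric sum as $a_{\tau+2}\le a_{\tau+1}$, which directly produces the bound $\frac{\rho}{1-\rho}\sum_\tau 1/(\tau^2h_\tau^{2d})$; for the weighted version it absorbs the $\ell^\alpha$ weight via $\bigl(\frac{k+\tau+1}{\tau+2}\bigr)^\alpha\le k^\alpha$ and the convergence of $\sum_k k^\alpha\rho^k$, and then finishes with the same head/tail splitting at a large $T_0$ that you use at the end. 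Your symmetrization $a_\tau a_k\le\frac12(a_\tau^2+a_k^2)$ and the Cauchy--Schwarz bound $\xi_t^2\le\frac{\rho}{1-\rho}\sum_k\rho^{t-k+1}a_k^2$ (hence $\sum_t\xi_t^2<\infty$) reach the same conclusions while dispensing entirely with the monotonicity of $(th_t^d)$ for the second and third assertions -- a mild but real weakening of the hypotheses, and arguably a cleaner reduction, since everything funnels through the single quantity $\sum_k a_k^2$. What the paper's computation buys in exchange is an explicit constant and a more elementary chain of inequalities (no Cauchy--Schwarz), with the weight $\tau^\alpha$ handled pointwise inside the double sum rather than deferred to an $L^2$ tail estimate. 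Your closing observation is also the right diagnosis: a naive Toeplitz argument on $\xi_{\tau-1}\to0$ alone cannot absorb the diverging factor $1/h_\tau^d$, and it is precisely the square-summability hypothesis (used by the paper through $\sum_\tau(\tau+1)^\alpha/(\tau^2h_\tau^{2d})$ and by you through $\sum_t\xi_t^2$) that saves the day.
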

\begin{proof}[Proof of Lemma \ref{lemmetechnique}]
The first statement is an immediate consequence of Toeplitz lemma.  To prove the second point, fix $t\geq 2$ and note that
\begin{align*}
\sum_{\tau =2} ^t\frac{\xi_{\tau-1}}{\tau h^d_\tau}& =\sum_{\tau=2}^t \frac{1}{\tau h^d_{\tau}}\sum_{\ell=0}^{\tau-2} \frac{\rho^{\tau-1-\ell}}{(\ell+1)h^d_{\ell+1}}\\
& = \sum_{\tau=0}^{t-2} \frac{1}{(\tau+1)h^d_{\tau+1}}\sum_{\ell=\tau+2}^t \frac{\rho^{\ell-\tau-1}}{\ell h^d_{\ell}}\\
& \leq \sum_{\tau=0}^{t-2} \frac{1}{(\tau+1)h^d_{\tau+1}(\tau+2)h^d_{\tau+2}}\sum_{\ell=\tau+2}^t \rho^{\ell-\tau-1}\\
& \leq \frac{\rho}{1-\rho}\sum_{\tau \geq 1}\frac{1}{\tau^2h^{2d}_{\tau}}<\infty.
\end{align*}
Similarly, the third statement follows by writing
\begin{align*}
\frac{1}{t^\alpha}\sum_{\tau =2} ^t\frac{\tau^{\alpha}\xi_{\tau-1}}{\tau h^d_\tau}& =\frac{1}{t^\alpha}\sum_{\tau=2}^t \frac{\tau^{\alpha}}{\tau h^d_{\tau}}\sum_{\ell=0}^{\tau-2} \frac{\rho^{\tau-1-\ell}}{(\ell+1)h^d_{\ell+1}}\\
& = \frac{1}{t^\alpha}\sum_{\tau=0}^{t-2} \frac{1}{(\tau+1)h^d_{\tau+1}}\sum_{\ell=\tau+2}^t \frac{\ell^{\alpha}\rho^{\ell-\tau-1}}{\ell h^d_{\ell}}\\
& \leq \frac{1}{t^\alpha}\sum_{\tau=0}^{t-2} \frac{(\tau+2)^{\alpha}}{(\tau+1)h^d_{\tau+1}(\tau+2)h^d_{\tau+2}}\sum_{k=1}^{t-\tau-1}\left(\frac{k+\tau+1}{\tau+2}\right)^{\alpha}\rho^k\\
&  \leq \frac{1}{t^\alpha}\sum_{\tau=0}^{t-2} \frac{(\tau+2)^{\alpha}}{(\tau+1)h^d_{\tau+1}(\tau+2)h^d_{\tau+2}}\sum_{k=1}^{t-\tau-1}k^{\alpha}\rho^k\\
& \leq \frac{C}{t^\alpha}\sum_{\tau=1}^{t-1} \frac{(\tau+1)^{\alpha}}{\tau^2h_{\tau}^{2d}},
\end{align*}
for some positive constant $C$. Now, fix $\varepsilon >0$ and choose $T_0$ large enough so that
$$\sum_{\tau \geq T_0} \frac{1}{\tau^2 h_{\tau}^{2d}}\leq \varepsilon.$$
Then, for all $t$ large enough,
\begin{align*}
\frac{1}{t^\alpha}\sum_{\tau=1}^{t-1} \frac{(\tau+1)^{\alpha}}{\tau^2h_{\tau}^{2d}}& =\frac{1}{t^\alpha}\sum_{\tau=1}^{T_0-1} \frac{(\tau+1)^{\alpha}}{\tau^2h_{\tau}^{2d}}+\frac{1}{t^\alpha}\sum_{\tau=T_0}^{t-1} \frac{(\tau+1)^{\alpha}}{\tau^2h_{\tau}^{2d}}\\
& \leq \frac{1}{t^\alpha}\sum_{\tau=1}^{T_0-1} \frac{(\tau+1)^{\alpha}}{\tau^2h_{\tau}^{2d}}+ \varepsilon.
\end{align*}
The conclusion follows by letting $t$ grow to infinity.
\end{proof}
\bibliography{biblio-biauzenine}
\end{document}